\documentclass[11pt,oneside,leqno]{amsart}
\usepackage{amsxtra}
\usepackage{amsopn}
\usepackage{amsmath,amsthm,amssymb}
\usepackage{amscd}
\usepackage{amsfonts}
\usepackage{latexsym}
\usepackage{verbatim}
\usepackage[matrix,arrow]{xy}
\usepackage{array}
\usepackage{multirow}
\usepackage{bigstrut}
\usepackage{rotating}
\usepackage{color,enumitem}

\theoremstyle{plain}

\newtheorem{theorem}{Theorem}[section]
\newtheorem*{theorem*}{Theorem}
\newtheorem{definition}[theorem]{Definition}

\newtheorem{prop}[theorem]{Proposition}
\newtheorem{cor}[theorem]{Corollary}

\newtheorem*{mt*}{Main Theorem}
\theoremstyle{remark}
\newtheorem{remark}{Remark}

\sloppy

%%%%%%%%%%%%%%%%%%%%%%%%%%%%%%%%%%%%%%%%%%%%%%%%%%% SIMBOLI MATEMATICI %

\newcommand\Q{{\mathbb Q}}
\newcommand\RR{{\mathbb R}}
\newcommand\ZZ{{\mathbb Z}}

\newcommand\T{{\mathbb T}}

\newcommand{\SSS}{\mathbb{S}}

\newcommand\ad{{\mathrm {ad}}}
\newcommand{\Ad}{{\mathrm {Ad\,}}}

%GOTICI

\newcommand{\cg}{{\mathfrak{g}}}

\newcommand{\ch}{{\mathfrak{h}}}

\newcommand{\cn}{{\mathfrak{n}}}

\newcommand{\beq}{\begin{equation}}
\newcommand{\eeq}{\end{equation}}
\newcommand{\bea}{\begin{eqnarray}}
\newcommand{\eea}{\end{eqnarray}}

\newcommand{\MinPol}{{\mathrm {MinPol\,}}}

\newenvironment{description*}%
  {\begin{description}%
    \setlength{\itemsep}{3pt}%
    \setlength{\parskip}{0pt}}%
  {\end{description}}

%

%=====================================================
\setlength{\textwidth}{15cm} \setlength{\evensidemargin}{1cm}
\setlength{\oddsidemargin}{1cm}
%=====================================================
\begin{document}

\title[Lattices, cohomology and models of 6 dim almost abelian solvmanifolds]{Lattices, cohomology and models of six dimensional\\ almost abelian solvmanifolds}
\author{Sergio Console}
\date{\today}
\address{S. Console: Dipartimento di Matematica G. Peano \\ Universit\`a di Torino\\
Via Carlo Alberto 10\\
10123 Torino\\ Italy.} \email{sergio.console@unito.it}
\subjclass[2000]{53C30,22E25,22E40}

\author{Maura Macr\`i}
\address{M. Macr\`i: Dipartimento di Matematica G. Peano \\ Universit\`a di Torino\\
Via Carlo Alberto 10\\
10123 Torino\\ Italy.} \email{maura.macri@unito.it}

\thanks{This work was supported by  MIUR and by GNSAGA of INdAM
}

\begin{abstract}
We construct lattices on six dimensional not completely solvable almost abelian Lie groups, for which the Mostow condition does not hold. For the corresponding compact quotients, we compute the de Rham cohomology (which does not agree in general with the Lie algebra one) and a minimal model. We show that some of these solvmanifolds admit not invariant symplectic structures and we study formality and Lefschetz properties.
\end{abstract}
\maketitle

\section{Introduction}

A solvmanifold $M$ 
is a compact homogeneous space $M=G/\Gamma$, where $G$ is  a connected and simply connected solvable Lie group and $\Gamma$ is  a lattice in $G$ (that is, a discrete subgroup with compact quotient space).

\smallskip

In the special case of nilmanifolds (i.e., the particular case the solvable Lie group is nilpotent), if the structure constants are rational, a lattice can be always found \cite{Mal}, while for solvmanifolds its existence is harder to establish. 

 \medskip

Lattices determine the topology of  solvmanifolds and are actually their fundamental groups (indeed solvmanifolds are Eilenberg-MacLane spaces of type $K(\pi ,1)$, i.e. all their homotopy groups vanish, besides the first).
Actually, lattices of solvmanifolds yield their diffeomorphism class (cf. Theorem~\ref{diffeo}).

\smallskip

Much of the rich structure of solvmanifolds is encoded by the Mostow fibration (see Section~\ref{Mostow})
$ N / \Gamma_N = (N \Gamma) / \Gamma \hookrightarrow G / \Gamma
\longrightarrow G / (N \Gamma) = \T^k\,$, 
where $\T^k$ is a ($k$-dimensional) torus and $N$ is the nilradical of $G$ (the largest nilpotent normal subgroup of $G$). In general, the Mostow bundle is not principal.

\smallskip

An important special case is provided if  the Lie algebra ${\mathfrak g}$ of $G$  has an abelian ideal of codimension one. In this case the Mostow bundle is a torus bundle over $S^1$ (actually a mapping torus, cf. \cite{BFM}), $G$ is called \emph{almost abelian} and $G$ can be written as a semidirect product $\mathbb{R}\ltimes_{\varphi}\mathbb{R}^n$. The action $\varphi$ of $\RR$ on $\RR^n$ is represented by a family of matrices $\varphi (t)$, which encode the monodromy or ``twist'' in the Mostow bundle (cf. \cite{fisici}). In particular the Lie algebra $\mathfrak{g}$ of $G$ has form $\mathbb{R}\ltimes_{\ad_{X_{n+1}}}\mathbb{R}^n$, where we consider $\mathbb{R}^n $ generated by $\{X_1,...,X_n\}$ and $\mathbb{R}$ by $X_{n+1}$, and $\varphi(t)=e^{t \ad_{X_{n+1}}}$. Moreover, a lattice can be always represented as $\Gamma= \mathbb{Z}\ltimes\mathbb{Z}^n$ (cf. \cite{gor}).

\medskip

In this paper, we find lattices in six dimensional almost abelian solvable Lie groups, using a criterium in \cite{bock} (Proposition~\ref{bocklattice}). 
The cases we deal with correspond to situations when the de Rham cohomology does not agree in general with  the  Chevalley-Eilenberg cohomology $H^* ({\mathfrak g})$ of the Lie algebra ${\mathfrak g}$ of $G$. Namely the \emph{Mostow condition} does not hold (see \cite{Mostow2} and \cite[Corollary 7.29]{Raghunathan} and Section~\ref{cohom}). Intuitively, in these cases there is some extra twist that modifies the topology and it turns out in particular that the cohomology depends on the lattice and not on the solvable Lie algebra only (the latter happens if the Mostow condition holds). We use two methods to compute cohomology and minimal models

\begin{itemize}
\item The \emph{modification of the solvable Lie group} \cite{Guan, CF2} (Section~\ref{cohom}). It consists in modifying the Lie group $G$ into a new $\tilde G$ in such a way that 
 $\tilde G /  \tilde \Gamma$ is diffeomorphic to $G /  \tilde \Gamma$ (where $\tilde \Gamma$ is a finite index subgroup in $\Gamma$, whose algebraic closure is connected) and $H^* (G /  \tilde \Gamma) \cong H^*(\tilde {\mathfrak g})$, where $\tilde {\mathfrak g}$ is the Lie algebra of $\tilde G$.
\item The \emph{Oprea-Tralle method} \cite{OT, OT2}, which consists in applying a result of Felix and Thomas \cite{ft} that yields a Koszul-Sullivan model for non-nilpotent fibrations.
\end{itemize}

We summarize the results in Table~\ref{tabella}, where are listed six dimensional non completely solvable unimodular, almost abelian Lie groups \cite{bock} (see Subsection~\ref{six}) which admit a lattice $\Gamma_{\bar t}$ for some choice of $\bar t\in \RR$ and of the parameters.  We use the same notation as in \cite{bock}. For each of the group in Table~\ref{tabella} we study the \textit{formality} ({\footnotesize \textbf{F}}), \textit{existence of invariant symplectic structures} ({\footnotesize \textbf{IS}}), \textit{existence of not invariant symplectic structures induced by the ones on the modified Lie algebra} ({\footnotesize \textbf{S}}), \textit{Hard Lefschetz property} ({\footnotesize \textbf{HL}}).

\begin{table}[h!]\label{tabella}
\caption{Six dimensional almost abelian solvmanifolds, 
which admit lattices (for some value of the parameter) and do not satisfy the Mostow condition}
\begin{center}
{\scriptsize
\setlength{\extrarowheight}{5pt}
\begin{tabular}{|c|c|c|c|c|c|c|c|}
\hline
\textbf{$G$} & \textbf{$\Gamma_{\bar t}$} & \textbf{$H^*(\mathfrak{g})$} & \textbf{$H^*(G/\Gamma_{\bar t})$} & \textbf{F} & \textbf{IS} & \textbf{S} & \textbf{HL}  \bigstrut 
	\\[-10pt]&&&&&&&\\
\hline \hline
\multirow{2}{*}{$G_{6.8}^{p=0}$} & $\bar t=2\pi$ & \multirow{2}{*}{$b_1=1,b_2=1,b_3=2$} & $b_1=3,b_2=3,b_3=2$ &  Yes & No  & No & $\smallsetminus$ \bigstrut \\ \cline{2-2} \cline{4-8}
 & $\bar t=\pi,\frac{\pi}{2},\frac{\pi}{3}$ &  & $b_1=1,b_2=1,b_3=2$ & Yes & No & $\smallsetminus$ & $\smallsetminus$ \bigstrut \\ 
\hline
\multirow{2}{*}{$G_{6.10}^{a=0}$}& $\bar t=2\pi$ & \multirow{2}{*}{$b_1=2,b_2=3,b_3=4$ }& $b_1=4,b_2=7,b_3=8$ & No & Yes & Yes & No$^\times$ \bigstrut \\ \cline{2-2} \cline{4-8}
& $\bar t=\pi,\frac{\pi}{2},\frac{\pi}{3}$ &  & $b_1=2,b_2=3,b_3=4$ & No & Yes & $\smallsetminus$ & No$^*$ \bigstrut \\
\hline
\multirow{2}{*}{$G_{6.11}^{p=0}$} & $\bar t=2\pi$ & \multirow{2}{*}{$b_1=1,b_2=1,b_3=1$ }&  $b_1=3,b_2=4,b_3=4$ & Yes & No  & No & $\smallsetminus$\bigstrut \\ \cline{2-2} \cline{4-8}
 & $\bar t=\pi,\frac{\pi}{2},\frac{\pi}{3}$ &  & $b_1=1,b_2=1,b_3=1$ & Yes & No & $\smallsetminus$ & $\smallsetminus$ \bigstrut \\
\hline 
\multirow{2}{*}{$G_{5.14}^{0}\times \mathbb{R}$} &$\bar t=2\pi$ & \multirow{2}{*}{$b_1=3,b_2=5,b_3=6$} & $b_1=5,b_2=11,b_3=14$ & No & Yes & Yes & No$^\times$ \bigstrut \\ \cline{2-2} \cline{4-8}
 & $\bar t=\pi,\frac{\pi}{2},\frac{\pi}{3}$ &  & $b_1=3,b_2=5,b_3=6$  & No & Yes & $\smallsetminus$ & No$^*$ \bigstrut \\ 
\hline
\multirow{10}{*}{$G_{5.17}^{p,-p,r}\times \mathbb{R}$} &\multirow{2}{*}{$\bar t=2\pi r_2$} & if $p\neq 0, r \neq \pm 1$ & $p\neq 0:\,b_1=6,b_2=15,b_3=20$ & \multirow{2}{*}{Yes} & \multirow{2}{*}{Yes} & \multirow{2}{*}{Yes} & \multirow{2}{*}{Yes$^\times$} \bigstrut \\ 
& & $b_1=2,b_2=1,b_3=0$ & $p=0:\,b_1=2,b_2=5,b_3=8$ &  & & & \bigstrut \\  \cline{2-2} \cline{4-8}
 & $\bar t=\pi,$ &  & $p\neq 0:\,b_1=2,b_2=1,b_3=0$  & \multirow{2}{*}{Yes} & \multirow{2}{*}{Yes} & \multirow{2}{*}{$\smallsetminus$} & \multirow{2}{*}{Yes$^*$} \bigstrut \\ 
&$r\, \mbox{even}$ &if $p=0,r\neq \pm 1$ & $p=0:\,b_1=4,b_2=7,b_3=8$ &  & & & \bigstrut \\  \cline{2-2} \cline{4-8}
 & $\bar t=\pi, $ & or $p\neq 0, r=\pm 1$ & $p\neq 0:\,b_1=2,b_2=5,b_3=8$  & \multirow{2}{*}{Yes} & \multirow{2}{*}{Yes} & \multirow{2}{*}{$\smallsetminus$} & \multirow{2}{*}{Yes$^*$} \bigstrut \\ 
&$r\, \mbox{odd}$ & $b_1=2,b_2=3,b_3=4$ & $p=0:\,b_1=2,b_2=7,b_3=12$ &  & & & \bigstrut \\  \cline{2-2} \cline{4-8}
$r=\frac {r_1}{r_2} \in \Q$ &$\bar t=\frac{\pi}{2},r\equiv_4 0$&  & $p=0: b_1=4,b_2=7,b_3=8$ & Yes & Yes & $\smallsetminus$ & Yes$^*$ \bigstrut \\  \cline{2-2} \cline{4-8}
&\multirow{2}{*}{$\bar t=\frac{\pi}{2},$}& if $p=0, r=\pm 1$  & $p\neq 0: b_1=2,b_2=3,b_3=4$ & \multirow{2}{*}{Yes} & \multirow{2}{*}{Yes} & \multirow{2}{*}{$\smallsetminus$} & \multirow{2}{*}{Yes$^*$} \bigstrut \\ 
& $r\equiv_4 1,3$ & $b_1=2,b_2=5,b_3=8$ & $p=0: b_1=2,b_2=5,b_3=8$ &  &  &  &  \bigstrut \\ \cline{2-2} \cline{4-8}
&$\bar t=\frac{\pi}{2},r\equiv_4 2$& & $p=0: b_1=2,b_2=3,b_3=4$ & Yes & Yes & $\smallsetminus$ & Yes$^*$ \bigstrut \\ 
\hline
\multirow{3}{*}{$G_{5.18}^{0}\times \mathbb{R}$} &$\bar t=2\pi$ & \multirow{3}{*}{$b_1=2,b_2=3,b_3=4$} & $b_1=4,b_2=9,b_3=13$ & No & Yes & Yes & No$^\times$ \bigstrut \\ \cline{2-2} \cline{4-8}
 & $\bar t=\pi,$ &  & $b_1=2,b_2=5,b_3=8$  & No & Yes & $\smallsetminus$ & No$^*$ \bigstrut \\ \cline{2-2} \cline{4-8}
 & $\bar t= \frac{\pi}{2},\frac{\pi}{3}$ &  & $b_1=2,b_2=3,b_3=4$  & No & Yes & $\smallsetminus$ & No$^*$ \bigstrut \\ 
\hline
\multirow{2}{*}{$G_{3.5}^{0}\times \mathbb{R}^3$} &$\bar t=2\pi$ & \multirow{2}{*}{$b_1=4,b_2=7,b_3=8$} & $b_1=6,b_2=15,b_3=20$ & Yes & Yes & Yes & Yes$^\times$ \bigstrut \\ \cline{2-2} \cline{4-8}
 & $\bar t=\pi,\frac{\pi}{2},\frac{\pi}{3}$ &  & $b_1=4,b_2=7,b_3=8$  & Yes & Yes & $\smallsetminus$ &  Yes$^*$ \bigstrut \\ 
 \hline 
\end{tabular}
$^\times=$ for both the invariant and the not invariant symplectic structures considered. \\
 $^*=$ for the invariant symplectic structures.
}
\end{center}
\label{default}
\end{table}%

Minimal models are computed in Section~\ref{minimal} where we prove the following

\begin{theorem}\label{formality}
$G_{6.8}^{p=0}/\Gamma,\,G_{6.11}^{p=0}/\Gamma,\,G_{5.17}^{p,-p,r}\times\mathbb{R}/\Gamma$ and $G_{3.5}^{0}\times\mathbb{R}^3/\Gamma$ are formal, while $G_{6.10}^{a=0}/\Gamma,\,G_{5.14}^{0}\times\mathbb{R}/\Gamma,\,G_{5.18}^{0}\times\mathbb{R}/\Gamma$ are not formal, for every lattice $\Gamma=\Gamma_{\bar t}$ considered in Table~\ref{tabella}.
\end{theorem}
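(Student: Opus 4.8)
The plan is to read off (non-)formality from the minimal models computed above for the entries of Table~\ref{tabella}, after reducing the question to low degree. Each solvmanifold $G/\Gamma_{\bar t}$ in the table is a compact oriented $6$-manifold (oriented because $G$ is unimodular), so by a theorem of Fern\'andez and Mu\~noz a compact oriented manifold of dimension $\le 6$ is formal if and only if it is $2$-formal; hence it suffices to understand the minimal Sullivan model $(\Lambda V_{\bar t},d)$ of $G/\Gamma_{\bar t}$ in degrees $\le 2$. Writing $V^{1}_{\bar t}$ (all of whose elements are closed) and $V^{2}_{\bar t}=Z^{2}\oplus W^{2}$ with $Z^{2}=\ker(d|_{V^{2}})$ and $W^{2}$ a complement — so $d$ identifies $W^{2}$ with $\ker(\Lambda^{2}H^{1}\to H^{2})$ — the manifold is $2$-formal exactly when every closed element of the ideal of $\Lambda V^{\le 2}_{\bar t}$ generated by $W^{2}$ is exact in $\Lambda V_{\bar t}$; in particular a non-vanishing triple Massey product $\langle\alpha,\beta,\gamma\rangle$ of classes of degree $1$ or $2$ obstructs formality. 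The models $(\Lambda V_{\bar t},d)$ are the ones produced by the two methods recalled in the introduction: when the algebraic closure of $\Gamma_{\bar t}$ is connected, $(\Lambda V_{\bar t},d)$ is the minimal model of the modified Chevalley--Eilenberg algebra $(\Lambda\tilde{\mathfrak g}^{*},d)$ of Section~\ref{cohom}; otherwise one passes to a finite-index sublattice $\tilde\Gamma$ with connected algebraic closure, uses $(\Lambda\tilde{\mathfrak g}^{*},d)$ for $G/\tilde\Gamma$, and descends to $G/\Gamma_{\bar t}$ via the subcomplex fixed by the finite group $F=\Gamma_{\bar t}/\tilde\Gamma$ (the Oprea--Tralle/Felix--Thomas Koszul--Sullivan model of the Mostow fibration, used in the alternative computation, has the same minimal model).

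For the four families asserted to be formal one checks $2$-formality directly on the model obtained above; the models are small and the point is that the non-closed degree-$\le 2$ generators contribute only exact elements to the relevant ideal. For $G_{3.5}^{0}\times\mathbb{R}^{3}$ the solvmanifold is diffeomorphic to a compact flat manifold (indeed $G_{3.5}^{0}=\widetilde{\mathrm{E}(2)}$ embeds in $\mathrm{Isom}(\mathbb{R}^{3})$ by screw motions), which carries a model with zero differential, namely the $F$-invariant subalgebra of $\Lambda(\mathbb{R}^{6})^{*}$, and is therefore formal; the same holds whenever the modification $\tilde{\mathfrak g}$ comes out abelian, and in the entries where $b_{1}$ is maximal the solvmanifold is even a torus. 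In the remaining entries $H^{1}$ is small, so $\ker(\Lambda^{2}H^{1}\to H^{2})$ is small and one checks by hand that every triple Massey product among degree-$1$ and degree-$2$ classes vanishes; where the table records $H^{*}(G/\Gamma_{\bar t})\cong H^{*}(\mathfrak g)$ this computation takes place already in $(\Lambda\mathfrak g^{*},d)$, which is then itself $2$-formal, while in the $\bar t=2\pi$ entries one works with the larger but still explicit modified model.

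For the three families asserted to be non-formal, the modification is a non-abelian nilpotent Lie algebra carrying a Heisenberg-type Massey product. Already for $\bar t=2\pi$ the monodromy $\varphi(2\pi)=e^{2\pi\,\ad_{X_{n+1}}}$ is unipotent, so the algebraic closure of $\Gamma_{2\pi}$ is connected, $F$ is trivial, and $G/\Gamma_{2\pi}$ is literally a non-toral nilmanifold — hence non-formal by Hasegawa's theorem. Concretely, the modified coframe contains a relation $d\tilde e^{j}=\tilde e^{k}\wedge\tilde e^{\ell}$ with $\tilde e^{k},\tilde e^{\ell}$ closed; then $[\tilde e^{k}][\tilde e^{\ell}]=0$, a cocycle representing $\langle[\tilde e^{k}],[\tilde e^{\ell}],[\tilde e^{k}]\rangle$ is a non-zero multiple of $\tilde e^{k}\wedge\tilde e^{j}$, and its class is non-exact and lies outside the indeterminacy $[\tilde e^{k}]\cdot H^{1}+H^{1}\cdot[\tilde e^{k}]$ (for $G_{5.14}^{0}\times\mathbb{R}$ one has $\tilde{\mathfrak g}\cong\mathfrak h_{3}\oplus\mathbb{R}^{3}$, and analogous relations occur for $G_{6.10}^{a=0}$ and $G_{5.18}^{0}\times\mathbb{R}$). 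For the remaining values $\bar t\in\{\pi,\tfrac{\pi}{2},\tfrac{\pi}{3}\}$ the finite group $F$ acts on $\tilde{\mathfrak g}$ only on the ``rotation'' directions, and by $\pm1$ or by a rotation there, so the representing cocycle above is $F$-invariant; hence the Massey product survives in the $F$-invariant model, and therefore on $G/\Gamma_{\bar t}$. Thus these three solvmanifolds are non-formal for every lattice in Table~\ref{tabella}.

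The main obstacle is the first step. Because the Mostow condition fails, $H^{*}(G/\Gamma_{\bar t})$ genuinely depends on $\Gamma_{\bar t}$ and need not be the Lie-algebra cohomology, so there is no Nomizu- or Hattori-type shortcut and one must carry out the modification carefully — split off the semisimple part of the monodromy $\varphi(\bar t)$, pass to a finite-index sublattice with connected algebraic closure, and identify the induced $F$-action — before the Massey-product computation or the $2$-formality check even makes sense. Once the finite-dimensional model is fixed, the rest is bookkeeping: the argument must be run for each listed value of $\bar t$ (distinct $\bar t$ give genuinely different models and cohomologies, as the $H^{*}(G/\Gamma_{\bar t})$ columns of Table~\ref{tabella} show), which is routine given the explicit coframes.
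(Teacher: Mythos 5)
Your overall strategy is legitimate and in places genuinely different from the paper's: the paper builds Koszul--Sullivan models of the Mostow fibration \`a la Oprea--Tralle and applies the Fern\'andez--Mu\~noz $s$-formality criterion to the $4$-dimensional algebra $\mathcal{M}_U$, whereas you apply the criterion directly to the $6$-manifold and, for the three non-formal families at $\bar t=2\pi$, invoke Hasegawa's theorem after observing that $\varphi(2\pi)$ is unipotent, so that $G/\Gamma_{2\pi}$ is a non-toral nilmanifold (modelled on $\tilde{\mathfrak g}$). That observation is correct and is a clean shortcut for those entries. The descent of a non-vanishing triple Massey product along the finite covering $G/\Gamma_{2\pi}\to G/\Gamma_{\bar t}$ is also a valid mechanism, and it does work as you describe for $G_{6.10}^{a=0}$ (the product $\langle[\alpha^3],[\alpha^6],[\alpha^3]\rangle\propto[\alpha^{23}]$ lives on the unrotated directions and survives, since $H^1(G_{6.10}^{a=0}/\Gamma_{\bar t})=\langle\alpha^3,\alpha^6\rangle$ and $[\alpha^{23}]\ne 0$ there) and for $G_{5.14}^{0}\times\mathbb{R}$ (where $\langle[\alpha^2],[\alpha^5],[\alpha^5]\rangle\propto[\alpha^{15}]$ involves only $F$-invariant classes).

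There is, however, a genuine gap for $G_{5.18}^{0}\times\mathbb{R}/\Gamma_{\bar t}$ with $\bar t\ne 2\pi$. Here $\tilde{\mathfrak g}\cong\mathfrak g_{5.1}\oplus\mathbb{R}$ with $d\alpha^1\propto\alpha^{35}$, $d\alpha^2\propto\alpha^{45}$, so the Heisenberg-type relation you use sits entirely inside the rotated directions: the finite group $F$ rotates $(X_1,X_2)$ and $(X_3,X_4)$ simultaneously, and the degree-one arguments of your Massey product (e.g.\ $[\alpha^3]$) are \emph{not} $F$-invariant. Indeed $H^1(G_{5.18}^{0}\times\mathbb{R}/\Gamma_{\bar t})=\langle\alpha^5,\alpha^6\rangle$, and every triple Massey product of these two classes is trivially zero (their only nonzero product is $[\alpha^{56}]\ne0$, so no such product is even defined except with vanishing representative). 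Your blanket claim that ``the representing cocycle above is $F$-invariant; hence the Massey product survives'' therefore fails for this family, and the non-formality of these quotients is not established by your argument. It can be repaired — for instance $\langle[\alpha^5],[\alpha^{34}],[\alpha^5]\rangle$ is defined on the quotient (since $\alpha^{345}=d\alpha^{14}$), is represented by $2\alpha^{145}\sim-2\alpha^{235}$, and one checks $[\alpha^{235}]$ lies outside the indeterminacy $[\alpha^5]\cdot H^2(G/\Gamma_{\bar t})=\langle[\alpha^{135}],[\alpha^{245}]\rangle$ — or by computing the minimal model as the paper does; but as written this case is missing. A secondary, smaller issue: for the formal families your verification of $2$-formality is asserted rather than carried out (vanishing of triple Massey products among degree-$1$ and degree-$2$ classes is necessary but not literally the Fern\'andez--Mu\~noz condition on the ideal generated by $N^{\le 2}$), though for these small models the check is routine and comparable in detail to the paper's own treatment.
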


%\medskip

Some of our results answer to questions  still open  on formality, hard Lefschetz property and cohomology of six dimensional solvmanifolds (see \cite[Proposition 6.18]{bock} and in the decomposable case  \cite[Table 6.3]{bock}).

Note that there are examples where the cohomology depends strongly on the lattice: for example $H^*(G/\Gamma_\pi)\not \cong H^*(G/\Gamma_{2\pi})\not \cong H^*(\cg)$, for $G=G^0_{5.18}\times \RR$.

\medskip

Six dimensional almost abelian solvmanifolds were consider by Andriot, Goi, Minasian and  Petrini \cite{fisici} in string backgrounds where the internal compactification manifold is a
solvmanifold.
Nilmanifolds and solvmanifolds have been extensively used in theoretical physics in type II compactifications, both to four-dimensional
Minkowski or Anti de Sitter,
and appear to be good candidates for possible de Sitter vacua as well.
Indeed their geometry is pretty well understood and, in particular, they can have negative curvature and therefore support internal fluxes (as well as  D-branes and O-plane sources).
In \cite{fisici} is carried out  a discussion of solutions of the supersymmetry (SUSY) equations, and the twist construction of solvmanifolds which serve as internal spaces. In our paper we try to fill out the limitation on solutions they observe, due to lack of isomorphism between the cohomology groups $H^{*}(\cg)$ and $H^{*}(G/\Gamma)$   for non completely solvable manifolds (and, more specifically, for solvmanifolds not satisfying the Mostow condition).

By \cite{FU}, solutions of the supersymmetry (SUSY) equations IIA possess a symplectic
half-flat structure, whereas solutions of the SUSY equations IIB admit a half-flat
structure (see e.g. \cite{CS} for the definition of half-flat structure, cf. also Section~\ref{sympl}). In Section~\ref{sympl}, we prove the following

\begin{prop}\label{not-inv}
We have the following behavior concerning half-flatness of (invariant) symplectic structures for the above solvmanifolds:
\begin{itemize}
\item $G_{6.10}^{a=0}/\Gamma_{2\pi}$ and $G_{5.14}^{0}\times \mathbb{R}/\Gamma_{2\pi}$ admit (not) invariant symplectic forms which are not half-flat.
\item $G_{5.17}^{p,-p,r}\times \mathbb{R}/\Gamma_{2\pi r_2}$ ($r=\frac{r_1}{r_2}\in \Q$)  admits an invariant symplectic form which is half-flat only for $p\geq 0$ and $r=1$ and it admits a not invariant symplectic form which is half-flat. 
\item $G_{5.18}^{0}\times \mathbb{R}/\Gamma_{2\pi}$ and $G_{3.5}^{0}\times \mathbb{R}^3/\Gamma_{2\pi}$ admit not invariant symplectic forms which are half-flat.
\end{itemize}
\end{prop}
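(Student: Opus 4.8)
The plan is to work case by case on the solvmanifolds listed in Proposition~\ref{not-inv}, and in each case to produce explicit symplectic forms (both invariant and, where relevant, non-invariant ones coming from the modified Lie algebra $\tilde{\mathfrak g}$ as in Section~\ref{cohom}) and then test the half-flat condition directly. Recall that a half-flat structure on a $6$-manifold is an $SU(3)$-structure $(\omega,\psi_+)$ with $d\omega^2=0$ and $d\psi_+=0$; for a symplectic half-flat structure one moreover asks $d\omega=0$, which is automatic here since we start from a symplectic form. So the real content is: given the symplectic form $\omega$, decide whether there exists a compatible almost complex structure $J$ (equivalently a suitable complex volume form $\psi=\psi_++i\psi_-$) with $d\psi_+=0$. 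First I would recall from the earlier sections the explicit structure equations of each Lie algebra ${\mathfrak g}$ and of the corresponding modified algebra $\tilde{\mathfrak g}$, together with the generators of $H^2(G/\Gamma_{\bar t})$ computed via the modification method; from these one writes down the general invariant (resp.\ $\tilde{\mathfrak g}$-invariant) symplectic form as a closed, nondegenerate $2$-form with indeterminate coefficients.

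The main computational step is then, for each candidate $\omega$, to parametrize all compatible $SU(3)$-structures. The standard approach: a nondegenerate $2$-form $\omega$ together with a choice of a stable $3$-form $\rho$ with $\hat J_\rho$-compatibility and the normalization $\omega\wedge\rho=0$, $\rho\wedge\hat J\rho = \tfrac23\omega^3$ determines the structure; equivalently one may fix $\omega$ in a standard Darboux-type invariant coframe and let $\psi_+ = \re(\Omega)$ range over the orbit of decomposable-type stable forms compatible with $\omega$. Then $d\psi_+ = 0$ becomes a system of polynomial equations in the coframe-expansion coefficients of $\psi_+$ (using the known $d$ of the coframe), subject to the compatibility/positivity inequalities. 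For the negative cases ($G_{6.10}^{a=0}/\Gamma_{2\pi}$, $G_{5.14}^0\times\mathbb R/\Gamma_{2\pi}$, and the invariant case of $G_{5.17}^{p,-p,r}\times\mathbb R$ for $p<0$ or $r\neq1$) I would exhibit a single explicit $\omega$ and show the closedness system for $\psi_+$ has no solution in the compatible orbit — typically because some coefficient forced by $d\psi_+=0$ to vanish is exactly the one forced by compatibility with $\omega$ to be nonzero. For the positive cases one just produces an explicit pair $(\omega,\psi_+)$ and checks $d\omega=d\psi_+=0$ and nondegeneracy/compatibility by direct substitution; for $G_{5.17}$ with $p\geq0$, $r=1$ this is the invariant example, and the not-invariant half-flat examples for $G_{5.18}^0\times\mathbb R/\Gamma_{2\pi}$ and $G_{3.5}^0\times\mathbb R^3/\Gamma_{2\pi}$ use the extra closed $2$- and $3$-forms on the modified algebra that are not ${\mathfrak g}$-invariant.

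The delicate point is the $G_{5.17}^{p,-p,r}\times\mathbb R$ family, where the answer genuinely depends on the parameters: here I expect to need a careful analysis showing that for $p<0$ or $r\neq 1$ every invariant symplectic form $\omega$ — not just a convenient representative — fails to admit a compatible closed $\psi_+$. Concretely I would write the most general invariant $\omega$ (a closed nondegenerate element of $\Lambda^2{\mathfrak g}^*$ with free parameters), compute the general compatible $\psi_+$, impose $d\psi_+=0$, and argue that the resulting equations force $\omega$ to be degenerate. Ruling out \emph{all} invariant symplectic forms, rather than one, is the main obstacle; the rest is bookkeeping with explicit coframes, the structure constants recorded earlier, and the Lefschetz/formality data already in Table~\ref{tabella}. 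A secondary subtlety is keeping track of which closed forms descend to $G/\Gamma_{\bar t}$ but are not ${\mathfrak g}$-invariant — this is exactly what the modification $\tilde{\mathfrak g}$ organizes, so the not-invariant constructions should be read off its Chevalley–Eilenberg complex.
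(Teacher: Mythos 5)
Your strategy (write down every invariant, resp.\ $\tilde{\mathfrak g}$-invariant, symplectic form, parametrize the compatible stable $3$-forms $\psi_+$, and decide solvability of $d\psi_+=0$) is sound in principle, but it is a genuinely different route from the paper, which proves the proposition in one short paragraph by quoting the classification of symplectic half-flat structures on decomposable six-dimensional Lie algebras in \cite{FMOU}: Proposition~4.2 there (no $4\oplus 2$ decomposable Lie algebra carries a symplectic half-flat structure) disposes of $G_{6.10}^{a=0}$ via $\tilde{\mathfrak g}_{6.10}^{a=0}\cong\mathfrak g_{4.1}\oplus\R^2$, and Proposition~4.3 there (the only $5\oplus 1$ decomposable Lie algebras with symplectic half-flat structures are $\mathfrak g_{5.1}\oplus\R$, $\mathfrak g_{5.17}^{p,-p,r}\oplus\R$ with $p\geq 0$, $r=1$, and $\mathfrak g_{5.7}^{-1,-1,1}\oplus\R$) settles all the remaining cases after consulting Table~2 for the modified algebras. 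Your approach buys self-containedness at the price of essentially re-deriving the relevant portion of that classification; the paper buys brevity at the price of an external reference.

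The one place where your plan is materially under-specified is exactly the step you flag as delicate: for the negative assertions the proposition needs that \emph{no} symplectic form on the algebra in question (not just a convenient representative) underlies a half-flat structure --- in particular for $\mathfrak g_{5.17}^{p,-p,r}\oplus\R$ with $p<0$ or $r\neq 1$, and for $\mathfrak g_{6.10}^{a=0}$, $\tilde{\mathfrak g}_{6.10}^{a=0}$ and $\mathfrak g_{5.14}^{0}\oplus\R$. Saying you ``would exhibit a single explicit $\omega$'' is not enough for these bullets, and the general-coefficient nonexistence argument over the full moduli of compatible $(\omega,\psi_+)$ is precisely the content of \cite[Propositions 4.2 and 4.3]{FMOU}. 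If you intend a self-contained proof you must carry that computation out (or restrict the claim to the specific symplectic forms constructed in Section~\ref{sympl}); otherwise the efficient course is to cite the classification, as the paper does. A small additional remark: half-flatness here is tested among \emph{invariant} $SU(3)$-structures (invariant under $G$ for the invariant forms, under the modified group $\tilde G$ for the not invariant ones), which your reduction to the Chevalley--Eilenberg complex of $\tilde{\mathfrak g}$ correctly implements but is worth stating explicitly.
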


\smallskip

\section{The Mostow bundle and almost abelian solvmanifolds}\label{Mostow}

Let $M=G / \Gamma$ be a solvmanifold and let $N$ be the nilradical of $G$ (of course, $N$ agrees with $G$ if and only if $M$ is a nilmanifold). Then $\Gamma_N := \Gamma \cap N$ is a lattice in $N$, $\Gamma N = N
\Gamma$ is  closed in $G$ and $G / (N \Gamma)=:\T^k$ is a torus.
Thus we have the so-called \emph{Mostow fibration}  \cite{Mostow}:
\[\begin{array}{ccl} \label{Mostowbundle}
N/\Gamma_N =(N\Gamma)/ \Gamma & \hookrightarrow & G/\Gamma \\
 & & \downarrow \\
 & & \T^k=G/(N\Gamma) \end{array}
\]
In six dimensions, the nilradical $\cn$ can have dimension from 3 to  6. Dimension 6 corresponds clearly to nilmanifolds. In the codimension one case the Mostow fibration is simpler.  A connected and simply-connected solvable Lie group $G$ with
nilradical $N$ is called \emph{almost nilpotent} if its nilradical has codimension one.
The group $G$ is then given by the semi-direct product $G=\mathbb{R} \ltimes_{\varphi} N$ of its nilradical with $\mathbb{R}$, where $\varphi$ is some action on $N$ depending on the direction
$\mathbb{R}$
\[ (t_1,x_1) \cdot (t_2,x_2) = (t_1 \cdot t_2, x_1\cdot \varphi(t_1)(x_2))
\qquad   (t_i,x_i )\in \mathbb{R} \times N \ .
\]In general, we label by $t$ the coordinate on  $\mathbb{R}$ and by $X_{n+1}=\partial_t$, $n=\dim N$, the corresponding
vector of the algebra.
From a geometrical point of view, $\varphi(t)$ encodes the monodromy of the Mostow bundle.

An \emph{almost abelian solvable group} is an almost nilpotent group whose nilradical is abelian $N=\mathbb{R}^n$. In this case, the action of $\mathbb{R}$ on $N$ is given by
\[\varphi(t)
=e^{t\ \ad_{X_{n+1}}}.
\]

In general, to find lattices in solvable Lie groups is a hard task. Only a necessary criterium is known, namely  that $G$ is unimodular {\cite[Lemma 6.2]{Mil}}.

A nice feature of almost abelian solvable groups is that there is 
a  criterion on the existence of a lattice \cite{bock}
\begin{prop}\label{bocklattice}
Let $G=\mathbb{R}\ltimes_{\varphi}\mathbb{R}^n$ be almost abelian solvable Lie group. Then $G$ admits a lattice if and only if there exists a $ t_0 \neq 0$ for which $\varphi(t_0)$ can be conjugated
to an integer matrix. 
\end{prop}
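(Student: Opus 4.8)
The plan is to prove the two implications directly from the semidirect product presentation $G=\R\ltimes_\varphi\R^n$, using the Mostow fibration recalled in Section~\ref{Mostow}. Throughout I read ``integer matrix'' as ``element of $GL(n,\Z)$'' (matrix and inverse both integral), which is the hypothesis one actually needs; this is no loss of generality for the groups at issue, since a lattice forces $G$ unimodular (\cite{Mil}) and then $\det\varphi(t_0)=e^{t_0\,\mathrm{tr}\,\ad_{X_{n+1}}}=1$, so any integral conjugate of $\varphi(t_0)$ automatically lies in $GL(n,\Z)$.

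For $(\Leftarrow)$, assume $A:=P\varphi(t_0)P^{-1}\in GL(n,\Z)$ with $t_0\neq0$, $P\in GL(n,\R)$. First I would note that conjugating the action leaves $G$ unchanged up to isomorphism: $\psi(t):=P\varphi(t)P^{-1}$ is again a one-parameter subgroup and $(t,v)\mapsto(t,Pv)$ is a Lie group isomorphism $\R\ltimes_\varphi\R^n\to\R\ltimes_\psi\R^n$, so we may assume $\varphi(t_0)=A\in GL(n,\Z)$. Then $\Gamma:=\{(kt_0,v):k\in\Z,\ v\in\Z^n\}$ is a lattice: it is a subgroup because $\varphi(kt_0)=A^k$ preserves $\Z^n$ for all $k\in\Z$; it is discrete since it lies in the discrete set $t_0\Z\times\Z^n\subset\R^{n+1}$; and it is cocompact because the projection $G\to\R$ induces a fibre bundle $G/\Gamma\to\R/t_0\Z=S^1$ with fibre $\R^n/\Z^n=\T^n$, indeed $G/\Gamma$ is the mapping torus of $A\colon\T^n\to\T^n$ (cf.\ Section~\ref{Mostow}), hence compact.

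For $(\Rightarrow)$, let $\Gamma\subset G$ be a lattice. If $\ad_{X_{n+1}}$ is nilpotent then $G$ is nilpotent, and the statement is classical: by Malcev \cite{Mal} a lattice exists iff $\g$ has rational structure constants in some basis, and since $\varphi(t)=e^{t\,\ad_{X_{n+1}}}$ is then polynomial in $t$, this is equivalent to $\varphi(t_0)$ being conjugate to an integral (automatically unipotent, hence $GL(n,\Z)$) matrix for suitable $t_0\neq0$. Otherwise the nilradical is exactly $N=\R^n$; by the Mostow fibration, $\Lambda:=\Gamma\cap N$ is a lattice in $\R^n$ and $G/(N\Gamma)=\T^1$, so the image of $\Gamma$ in $G/N=\R$ is a lattice $t_0\Z$, $t_0\neq0$, and we may choose $\gamma=(t_0,v_0)\in\Gamma$ over $t_0$. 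As $N$ is normal in $G$, $\Lambda$ is normal in $\Gamma$, so $\gamma\Lambda\gamma^{-1}=\Lambda$; and a direct computation in the semidirect product gives $\gamma(0,w)\gamma^{-1}=(0,\varphi(t_0)w)$, so $\varphi(t_0)$ restricts to an automorphism of the free abelian group $\Lambda$. Writing it in a $\Z$-basis $w_1,\dots,w_n$ of $\Lambda$ — equivalently, conjugating by the matrix $P$ with columns $w_1,\dots,w_n$ — realizes $P^{-1}\varphi(t_0)P$ as an element of $GL(n,\Z)$. (Alternatively, \cite{gor} supplies the normal form $\Gamma=(t_0\Z)\ltimes\Lambda$ directly, after which only the last two sentences remain.)

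The semidirect-product identities and the discreteness/cocompactness of $(t_0\Z)\ltimes_A\Z^n$ are routine. The one structural input — the step I would single out as the crux — is the use of Mostow's theory in the converse: it is what forces $\Gamma\cap N$ to be a lattice in $N=\R^n$ and $\Gamma$ to surject onto a lattice in $G/N=\R$, hence produces the element $\gamma$ whose conjugation realizes $\varphi(t_0)$; without this there is no reason for $\varphi(t_0)$ to preserve any lattice in $\R^n$. A minor but real point to keep in view is that ``integral'' must be taken to mean ``in $GL(n,\Z)$'': on the lattice side this is automatic (an automorphism of $\Lambda$ has integral inverse), and on the construction side it is precisely what makes $(t_0\Z)\ltimes_A\Z^n$ closed under inversion.
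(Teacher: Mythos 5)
Your proof is correct. Note that the paper itself gives no proof of this proposition---it is quoted from \cite{bock}---and your argument is essentially the standard one found there and in \cite{gor}: the mapping-torus construction $\Gamma=(t_0\mathbb{Z})\ltimes_A\mathbb{Z}^n$ for the ``if'' direction, and for the converse the Mostow fibration, which forces $\Gamma\cap N$ to be a lattice in $N=\mathbb{R}^n$ preserved by conjugation by a lift of a generator of $\pi(\Gamma)=t_0\mathbb{Z}$. Your observation that ``integer matrix'' must be read as ``element of $GL(n,\mathbb{Z})$'' is a genuine and worthwhile precision: as stated, the criterion would fail for non-unimodular $G$ (e.g.\ $n=1$, $\varphi(t)=2^{t}$, where $\varphi(1)$ is integral but no lattice exists), and your reduction via $\det\varphi(t_0)=e^{t_0\,\mathrm{tr}\,\ad_{X_{n+1}}}=1$ in the unimodular case, which is the only case the paper uses, closes that gap correctly.
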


We shall call \emph{almost abelian solvmanifold} the corresponding quotient of an almost abelian solvable Lie group by a lattice.

\smallskip

Lattices of solvmanifolds yield their diffeomorphism class. Indeed

\begin{theorem} \cite[Theorem 3.6]{Raghunathan}  \label{diffeo}
Let $G_i / \Gamma_i$ be solvmanifolds for $i \in \{1,2\}$ and
$\psi : \Gamma_1 \to \Gamma_2$ an isomorphism. Then there exists a diffeomorphism $\Psi : G_1 \to G_2$ such that
\begin{itemize}
\item[(i)] $\Psi|_{\Gamma_1} = \psi ,$
\item[(ii)] $\Psi(p \gamma) = \Psi(p) \psi(\gamma)$, for any $\gamma \in \Gamma_1$ and any $p \in G_1$.
\end{itemize}
\end{theorem}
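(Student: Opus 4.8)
The plan is to reduce the assertion to the asphericity of solvmanifolds together with Mal'cev rigidity in the nilpotent case, and then to assemble $\Psi$ fibrewise along the Mostow bundle of Section~\ref{Mostow}. A connected, simply connected solvable Lie group is diffeomorphic to a Euclidean space (e.g.\ via the nilradical fibration $N_i\hookrightarrow G_i\to G_i/N_i\cong\RR^{k}$, whose fibre is diffeomorphic to $\RR^{\dim N_i}$ through $\exp$ and whose base is contractible); hence each $M_i=G_i/\Gamma_i$ is a $K(\Gamma_i,1)$. Since $M_1$ is a closed manifold, $\psi$ is realised by a continuous map $f\colon M_1\to M_2$, unique up to homotopy, which is a homotopy equivalence because $\psi$ is an isomorphism of fundamental groups of aspherical spaces. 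Lifting $f$ to universal covers yields a $\psi$-equivariant map $\Psi_0\colon G_1\to G_2$, i.e.\ $\Psi_0(p\gamma)=\Psi_0(p)\psi(\gamma)$ for all $\gamma\in\Gamma_1$ and $p\in G_1$; so (i) and (ii) already hold up to homotopy, and the whole task is to deform $\Psi_0$, through $\psi$-equivariant maps, into a diffeomorphism (a final homotopy rel the $\Gamma_1$-action, after normalising $\Psi(e)=e$, then makes $\Psi|_{\Gamma_1}=\psi$ hold literally).

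Next I would verify that $\psi$ respects the Mostow structure. The intersection $\Gamma_i\cap N_i$ with the nilradical is the Fitting subgroup of the polycyclic group $\Gamma_i$ --- intrinsically, the set of $\gamma$ on which $\Ad_{G_i}(\gamma)$ is unipotent --- hence a characteristic subgroup \cite{Raghunathan}, so $\psi(\Gamma_1\cap N_1)=\Gamma_2\cap N_2$. Consequently the base tori have the same dimension $k$, the isomorphism $\psi$ descends to an isomorphism $\bar\psi\colon\ZZ^{k}=\Gamma_1/(\Gamma_1\cap N_1)\to\Gamma_2/(\Gamma_2\cap N_2)=\ZZ^{k}$, and it restricts to an isomorphism $\psi_N\colon\Gamma_1\cap N_1\to\Gamma_2\cap N_2$ of lattices in nilpotent Lie groups. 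By Mal'cev rigidity \cite{Mal}, $\psi_N$ extends uniquely to a Lie group isomorphism $\Psi_N\colon N_1\to N_2$; in particular $\dim N_1=\dim N_2$ and $\dim G_1=\dim G_2$.

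Finally I would assemble $\Psi$ over the base. One views $G_i$ as a $\Gamma_i$-equivariant smooth fibre bundle over $G_i/N_i\cong\RR^{k}$ with fibre $N_i$, on which $\Gamma_i\cap N_i$ acts fibrewise and $\ZZ^{k}$ acts on $\RR^{k}$ by translations --- for the almost abelian groups of Table~\ref{tabella} this is simply the semidirect product $\RR\ltimes_{\varphi}\RR^{n}$. Fix a linear isomorphism $A\colon\RR^{k}\to\RR^{k}$ lifting $\bar\psi$ and put $\Psi_N$ on the fibre over $0$. One cannot in general propagate this to a group isomorphism, because $\Psi_N$ need not intertwine the monodromy of the two Mostow bundles; this is exactly why diffeomorphic solvmanifolds can have non-isomorphic Lie groups, and why one must work smoothly. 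Instead, the set of fibre-preserving, $\Gamma_1$-equivariant diffeomorphisms $G_1\to G_2$ covering $A$ is nonempty and path-connected: one transports $\Psi_N$ over the contractible base and corrects the resulting failure of $\Gamma_1$-equivariance by a convex-combination / partition-of-unity argument over $\RR^{k}$, the relevant obstructions over a contractible base being trivial. This produces the desired diffeomorphism, which satisfies (ii); composing with a homotopy that fixes the $\Gamma_1$-action then yields (i).

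The step I expect to be the main obstacle is precisely this last assembly. Because $\psi$ need not extend to an isomorphism of Lie groups, no purely algebraic argument suffices: one must genuinely deform the monodromy of the Mostow bundle over the base torus while preserving both $\psi$-equivariance and bijectivity. The remaining ingredients --- asphericity, the characteristic nature of $\Gamma_i\cap N_i$, and the extension of $\psi_N$ --- are standard or reduce to the nilpotent case of Mal'cev's theorem.
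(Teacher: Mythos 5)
The paper offers no proof of this statement (it is quoted from Raghunathan, Theorem 3.6), so your attempt must stand on its own, and it has a concrete fatal gap in the second paragraph. You claim that $\Gamma_i\cap N_i$ is a characteristic subgroup of $\Gamma_i$ --- identifying it both with the Fitting subgroup and with the set of $\gamma$ on which $\Ad_{G_i}(\gamma)$ is unipotent --- and conclude that $\psi(\Gamma_1\cap N_1)=\Gamma_2\cap N_2$ and that the two Mostow bases have the same dimension. This is false, and the paper itself is built on counterexamples to it. Take $G_1=G_{3.5}^{0}\times\RR^3$ with $\Gamma_1=\Gamma_{2\pi}$: since $\varphi(2\pi)=\mathrm{id}$, $\Gamma_1\cong\ZZ^6$ and $G_1/\Gamma_1$ is a $6$-torus, yet $N_1\cong\RR^5$ and $\Gamma_1\cap N_1\cong\ZZ^5$. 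Comparing with $G_2=\RR^6$, $\Gamma_2=\ZZ^6$ (where $N_2=G_2$ and $\Gamma_2\cap N_2=\ZZ^6$), no isomorphism $\psi\colon\ZZ^6\to\ZZ^6$ can carry a rank-$5$ subgroup onto a rank-$6$ one, and the base tori have dimensions $1$ and $0$ respectively. Note also that your two proposed intrinsic descriptions already disagree here: every element of $\Gamma_1$ has unipotent (indeed trivial) adjoint action, and $\mathrm{Fitt}(\Gamma_1)=\Gamma_1$, neither of which equals $\Gamma_1\cap N_1$. This non-invariance of the Mostow fibration under abstract isomorphisms of lattices is precisely the phenomenon (failure of the Mostow condition, existence of the modification $\tilde G\not\cong G$ with $\tilde G/\Gamma\cong G/\Gamma$) that the whole paper exploits, so any proof that starts by matching the two Mostow fibrations via $\psi$ cannot work.

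Beyond that, the first paragraph (asphericity, realization of $\psi$ by a homotopy equivalence, lifting to a $\psi$-equivariant map of universal covers) is correct and standard, but the final ``assembly'' step --- transporting $\Psi_N$ over the base and ``correcting the failure of $\Gamma_1$-equivariance by a convex-combination / partition-of-unity argument'' --- is not an argument: upgrading an equivariant homotopy equivalence of aspherical manifolds to an equivariant diffeomorphism is exactly the content of the theorem and is where all the work lies. Raghunathan's actual proof does not proceed by matching Mostow fibrations; it is an induction on the structure of the solvable groups using the algebraic theory of lattices (Theorems 3.2--3.5 of his Chapter III), and it is designed precisely to circumvent the fact that $\Gamma\cap N$ is not determined by the abstract group $\Gamma$.
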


As a consequence  two compact solvmanifolds with isomorphic fundamental groups are diffeomorphic.

%later

\section{Modification of the cohomology}\label{cohom}

If the algebraic closures ${\mathcal A} (\Ad_G (G))$ and  ${\mathcal A} (\Ad_G (\Gamma))$ are equal, one says that $G$ and $\Gamma$ satisfy the \emph{Mostow condition} (see \cite{Raghunathan} for more details and definitions). In this case, the de Rham  cohomology $H^*(M)$ of  the compact  solvmanifold $M = G / \Gamma$  can be computed  by the  Chevalley-Eilenberg cohomology $H^* ({\mathfrak g})$ of the Lie algebra ${\mathfrak g}$ of $G$ (see \cite{Mostow2} and \cite[Corollary 7.29]{Raghunathan}); indeed, one has the isomorphism $H^*(M) \cong H^* ({\mathfrak g})$.
A special case is provided by nilmanifolds (Nomizu's Theorem, \cite{Nomizu}) and more generally if $G$ is \emph{completely solvable} \cite{Hattori}, i.e. all the linear operators $\ad_X: {\mathfrak g} \to {\mathfrak g}$, $X \in {\mathfrak g}$  have only real eigenvalues.

\smallskip

For almost abelian solvmanifolds, Gorbatsevich found a criterion to decide whether the Mostow condition holds \cite{gor}:

\begin{prop} \label{gorba}
The Mostow condition is satisfied if and only if $\pi i$ can not be written as linear combination in $\mathbb{Q}$ of the eigenvalues of $t_0\ad_{X_{n+1}}$, where $\Gamma$ is generated by $t_0$.
\end{prop}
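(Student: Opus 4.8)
The plan is to unwind the Mostow condition in the almost abelian case down to a concrete linear-algebra statement about the eigenvalues of $\ad_{X_{n+1}}$, using that $\Gamma = \Z \ltimes \Z^n$ is generated (over the $\R$-part) by a single element $t_0$. First I would recall that for $G = \R \ltimes_\varphi \R^n$ one has $\Ad_G(g)$ acting on $\g$ by a block matrix whose essential part is $\varphi(t) = e^{t\,\ad_{X_{n+1}}}$ on the abelian ideal $\R^n$ together with a unipotent contribution; hence the Zariski closure $\mathcal{A}(\Ad_G(G))$ is controlled by the algebraic hull of the one-parameter group $\{\varphi(t) : t \in \R\}$, and $\mathcal{A}(\Ad_G(\Gamma))$ by the algebraic hull of the cyclic group $\{\varphi(t_0)^k : k \in \Z\} = \{\varphi(k t_0) : k\in\Z\}$. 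So the Mostow condition reduces to the equality of these two algebraic hulls.

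The key step is the classical description of the Zariski closure of a one-parameter subgroup versus a cyclic subgroup of $GL_n$ in terms of eigenvalue relations. Write $A = t_0\,\ad_{X_{n+1}}$ and use the multiplicative Jordan decomposition $\varphi(t_0) = e^A = s u$ with $s$ semisimple, $u$ unipotent, $su = us$. The closure of $\{(e^A)^k\}$ is the almost-direct product of the closure of the unipotent part (which is always the full one-parameter unipotent group $\{u^\tau\}$, matching what the real one-parameter group gives) and the closure $T$ of $\{s^k : k \in \Z\}$ inside the torus $T_0$ which is the closure of $\{e^{tA_s} : t\in\R\}$, $A_s$ the semisimple part of $A$. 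The discrepancy between $H^*(M)$ and $H^*(\g)$ is thus entirely carried by whether $T = T_0$. Diagonalizing $A_s$ with eigenvalues $\lambda_1, \dots, \lambda_n$ (the eigenvalues of $t_0\,\ad_{X_{n+1}}$), the character lattice of $T_0$ is the set of $(m_1,\dots,m_n)\in\Z^n$ with $\sum m_j \lambda_j = 0$, while the character lattice of $T$ is the set of $(m_1,\dots,m_n)$ with $\sum m_j \lambda_j \in 2\pi i\,\Z$ (because $s^k = e^{kA_s}$ and the relation must hold on the $k$-th powers for all $k$). Therefore $T = T_0$ — i.e. the Mostow condition holds — if and only if no nontrivial $\Z$-linear (equivalently $\Q$-linear) combination of the $\lambda_j$ lies in $2\pi i\,\Z\setminus\{0\}$; since the $\lambda_j$ come in conjugate pairs and such a relation can be rescaled, this is equivalent to saying that $\pi i$ is not a $\Q$-linear combination of the $\lambda_j$, which is exactly the asserted criterion.

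The remaining routine point is to justify that the unipotent parts automatically agree, so they contribute nothing to the obstruction: the unipotent part of $\varphi(t_0)$ lies in the one-parameter unipotent subgroup generated by the unipotent part of $\varphi(t)$, and the Zariski closure of any nontrivial cyclic unipotent subgroup of $GL_n$ is already that whole one-parameter subgroup, so equality is automatic there. One also checks that the extra $\R$-direction (the torus factor $\T^k$ in the Mostow fibration) and the block-triangular shape of $\Ad_G$ do not introduce further conditions, since $\Ad$ is trivial on that part up to the already-discussed semisimple/unipotent pieces. The main obstacle is the bookkeeping in the second step: setting up the Jordan decomposition of $\Ad_G(g)$ compatibly for the group and the lattice and correctly identifying the character lattices of the two tori, so that the gap between "$=0$" and "$\in 2\pi i\,\Z$" is pinned down precisely. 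Once that is in place, the translation to the statement about $\pi i$ and rational combinations of the eigenvalues is immediate.
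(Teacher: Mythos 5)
The paper does not prove this proposition --- it is quoted directly from Gorbatsevich \cite{gor} --- so there is no in-text argument to compare against; your reconstruction is correct and follows exactly the line of the cited proof: reduce the Mostow condition to equality of the algebraic hulls of the one-parameter group $e^{t\,\ad_{X_{n+1}}}$ and the cyclic group $\varphi(t_0)^{\Z}$, dispose of the unipotent parts (and of the $\Ad$-image of $\R^n$ versus $\Z^n$, which is a lattice in an abelian unipotent group), and detect the gap on the semisimple torus via the observation that a character $\sum m_j\lambda_j$ kills the cyclic group iff it lies in $2\pi i\,\Z$ rather than vanishing, which is equivalent to $\pi i$ being a $\Q$-combination of the eigenvalues. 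The only nitpick is terminological: what you call the ``character lattice of $T_0$'' is really the sublattice of characters \emph{trivial} on $T_0$ (its annihilator), and the criterion is that the two annihilators coincide; this does not affect the argument.
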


Let $M=G/\Gamma$ be a solvmanifold.
By \cite[Theorem 6.11, p. 93]{Raghunathan} it is not restrictive to suppose that 
 ${\mathcal A} (\Ad_G (\Gamma))$ is connected. Otherwise we  pass from $\Gamma$ to a finite index subgroup  $\tilde \Gamma$. This  is equivalent to passing from $M = G/ \Gamma$  to the space $ G/ \tilde \Gamma$ which is a finite-sheeted covering of $M$. 
 
By Borel density theorem (see e.g. \cite[Theorem 5.5]{Raghunathan}), there exists a compact torus $\T_{cpt}$ such that  $\T_{cpt}  {\mathcal A} (\Ad_G (\Gamma)) = {\mathcal A} (\Ad_G (G))$. 

Then the main step for the ``modification method'' is the following

\begin{theorem} \cite{CF2} \label{metodo}
Let $G$ be a solvable simply connected Lie group and let $\Gamma$ be lattice in $G$ such that $G/\Gamma$ is a solvmanifold and ${\mathcal A} (\Ad_G (\Gamma))$ is connected. Suppose ${\mathcal A} (\Ad_G (G))=\T_{c}{\mathcal A} (\Ad_G (\Gamma))$, with $\T_{c}$ maximal compact torus of ${\mathcal A} (\Ad_G (G))$. Then exists a normal simply connected subgroup  $\tilde{G}$ of $\T_{c}\ltimes G$ such that ${\mathcal A} (\Ad_{\tilde{G}} (\tilde{G}))={\mathcal A} (\Ad_{\tilde{G}} (\Gamma)) $.
\end{theorem}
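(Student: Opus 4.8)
The plan is to construct $\tilde G$ explicitly as a subgroup of the semidirect product $\T_c \ltimes G$, exploiting the hypothesis that the extra twist in the Mostow data comes precisely from the compact torus $\T_c$. First I would set up coordinates: write $G = \R \ltimes_\varphi \R^n$ in the almost abelian case (the general solvable case being handled analogously but with heavier notation), so that $\Ad_G$ restricted to the $\R$-factor is a one-parameter group $t \mapsto \varphi(t) = e^{t\,\ad_{X_{n+1}}}$. Decompose $\ad_{X_{n+1}}$ into its semisimple and nilpotent parts and further split the semisimple part into a part with real eigenvalues and a part $S$ with purely imaginary eigenvalues; the closure of $\{e^{tS}\}$ is (conjugate to) the compact torus $\T_c$, and by Proposition~\ref{gorba} the failure of the Mostow condition is exactly the statement that $e^{t_0 S}$ is a nontrivial element of $\T_c$ that is \emph{not} in $\mathcal{A}(\Ad_G(\Gamma))$ while being forced into $\mathcal{A}(\Ad_G(G))$.

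The key construction: inside $\T_c \ltimes G$, consider the one-parameter subgroup generated by $(-S, X_{n+1})$ (more precisely by the vector field that is $X_{n+1}$ on the $G$-factor corrected by $-S$ on the torus factor), together with the $\R^n$-factor of $G$ embedded as $\{e\} \ltimes \R^n$. Call the resulting connected simply connected subgroup $\tilde G$. Its Lie algebra $\tilde{\mathfrak g}$ is again almost abelian, $\R \ltimes_{\tilde\varphi} \R^n$, with $\tilde\varphi(t) = e^{t(\ad_{X_{n+1}} - S)}$; since $\ad_{X_{n+1}} - S$ has only real eigenvalues, $\tilde G$ is completely solvable. The steps would then be: (1) check $\tilde G$ is normal in $\T_c \ltimes G$ — this follows because $\T_c$ centralizes $\R^n$ and normalizes the flow direction; (2) check $\Gamma$ (viewed via $\gamma \mapsto (e,\gamma)$, or via the graph embedding $\gamma \mapsto (\sigma(\gamma),\gamma)$ for an appropriate section $\sigma$ into $\T_c$) actually lies in $\tilde G$ — here one uses that the lattice generator $t_0 X_{n+1}$, when corrected by $-t_0 S \in \T_c$, lands in $\tilde G$, and that $\Z^n \subset \R^n \subset \tilde G$; (3) compute $\mathcal{A}(\Ad_{\tilde G}(\tilde G))$ and $\mathcal{A}(\Ad_{\tilde G}(\Gamma))$ and show they coincide, which reduces to showing that the compact part has been killed: $\Ad_{\tilde G}$ of the flow generator has purely real eigenvalues, so its algebraic closure is already the (unipotent $\times$ real split) group with no compact torus to add, and the Borel density / Zariski density of $\Gamma$ in $\tilde G$ forces equality.

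I expect the main obstacle to be verifying step (2) — that the lattice $\Gamma$ genuinely sits inside $\tilde G$ after the modification, with the correct identification. One must choose the embedding of $\Gamma$ into $\T_c \ltimes G$ so that it is simultaneously a subgroup of $\tilde G$ and still cocompact with $\tilde G / \Gamma$ diffeomorphic to $G/\Gamma$; the subtlety is that the "twist section" $\sigma : \Gamma \to \T_c$ must be a homomorphism on $\Gamma$ (so that the graph is a subgroup), which is where the connectedness hypothesis on $\mathcal{A}(\Ad_G(\Gamma))$ is used, and one must check the projection $\tilde G \to G$ restricts to a diffeomorphism of quotients. A secondary technical point is ensuring $\tilde G$ is simply connected, i.e. that no circle has been accidentally introduced — this follows because we subtracted off exactly the compact direction $S$, leaving a group diffeomorphic to $\R^{n+1}$. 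Once the embedding is pinned down, the cohomology consequence $H^*(G/\Gamma) = H^*(\tilde G/\Gamma) \cong H^*(\tilde{\mathfrak g})$ follows from Hattori's theorem applied to the completely solvable group $\tilde G$, though that is beyond the present statement.
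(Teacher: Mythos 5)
Your overall strategy is the one the construction in \cite{CF2} (sketched in the paper right after the statement) actually follows: pass to $\T_c\ltimes G$, define a homomorphism $\sigma$ from $G$ into the compact torus that undoes the rotational part of $\Ad$, and take $\tilde G$ to be the graph of $\sigma$, which is normal, simply connected, diffeomorphic to $G$, and on which the two algebraic closures coincide by a Borel-density/Jordan-decomposition argument. The Lie algebra $\tilde\cg=\{(\sigma_*(X),X)\}$ you would obtain is exactly the one recorded in the paper's Remark.

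There is, however, one genuine gap, and it sits precisely where you flagged ``the main obstacle'' (your step (2)). You subtract the \emph{entire} purely imaginary semisimple part $S$ of $\ad_{X_{n+1}}$, i.e.\ you project onto all of $\T_c$. The correct construction first splits $\T_c=\SSS_c\times\overline{\SSS}_c$, where $\overline{\SSS}_c$ is a maximal compact torus of ${\mathcal A}(\Ad_G(\Gamma))$, and defines $\sigma$ by projecting onto the complementary factor $\SSS_c$ only. This is what forces $\sigma|_\Gamma\equiv e$: because ${\mathcal A}(\Ad_G(\Gamma))$ is connected, the elliptic Jordan components of the elements $\Ad_G(\gamma)$ all lie in $\overline{\SSS}_c$ and die under the projection to $\SSS_c$, so $\Gamma$ embeds in $\tilde G$ as the honest subgroup $\{(e,\gamma)\}$ and ${\mathcal A}(\Ad_{\tilde G}(\Gamma))$ makes sense. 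With your full subtraction, whenever $\overline{\SSS}_c\neq\{e\}$ the first coordinate of $\gamma\mapsto(\sigma(\gamma),\gamma)$ is nontrivial and the image is not a subgroup isomorphic to $\Gamma$, since the semidirect multiplication twists $\gamma_2$ by the nontrivial automorphism $\sigma(\gamma_1)$. This is not a hypothetical degeneracy: for $G_{6.11}^{p=0}$ the rotation block with irrational speed $s$ gives $\overline{\SSS}_c$ equal to a full circle, the modified algebra $\tilde\cg_{6.11}^{p=0}$ retains the bracket $[X_4,X_6]=qX_4-sX_5$, and $\tilde G$ is \emph{not} completely solvable --- contradicting your claim that $\ad_{X_{n+1}}-S$ always has only real eigenvalues (it satisfies the Mostow condition for $\Gamma$ without being completely solvable). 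So the fix is not cosmetic: $S$ must be replaced by its $\SSS_c$-component before your steps (2) and (3) go through.
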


The Mostow condition holds for the Lie group $\tilde{G}$, so $H^*(\tilde{G}/\Gamma)=H^*(\tilde{\mathfrak{g}})$.

The modified solvable group $\tilde{G}$ is obtained from $G$ by killing the action of subtorus $\SSS_c$ that we get by comparing the compact and $\mathbb{C}$-diagonalizable part of ${\mathcal A} (\Ad_G (G))$ and ${\mathcal A} (\Ad_G (\Gamma))$. More precisely,  let  $\overline{\SSS}_{c}$  be a maximal  compact torus of ${\mathcal A} (\Ad_G (\tilde \Gamma))$ contained in $\T_c$.  Let $\SSS_{c}$ be a subtorus of $\T_{c}$ complementary to $\overline{\SSS}_{c}$ so that $\T_{c} = \SSS_{c} \times \overline{\SSS}_{c}$. Let $\sigma$  be the composition of the homomorphisms:
$$
\sigma: G  \stackrel{\Ad}  \longrightarrow {\mathcal A} (\Ad_G (G))  \stackrel{{\mbox {\footnotesize proj}}}  \longrightarrow \T_{c}  \stackrel{{\mbox {\footnotesize proj}}} \longrightarrow \SSS_{c} \stackrel{ x \to x^{-1}}  \longrightarrow \SSS_{c}.
$$
One uses $\sigma$ to get rid of $\SSS_{c}$ (see \cite{CF2}).

It turns out that $\tilde{G}$ is diffeomorphic to $G$, they are both simply connected and, by Theorem~\ref{diffeo}, $\tilde{G}/\Gamma$ is diffeomorphic to $G/\Gamma$. Thus $H^*(G/\Gamma) =H^*( \tilde{G}/\Gamma)$ and we get

\begin{cor}\label{modif}
Let $G$ be a solvable simply connected Lie group and let $\Gamma$ be lattice in $G$ such that $G/\Gamma$ is a solvmanifold and ${\mathcal A} (\Ad_G (\Gamma))$ is connected. Then we have $$H^*(G/\Gamma)=H^*(\tilde{G}/\Gamma)=H^*(\tilde{\mathfrak{g}}),$$ where $\tilde{\mathfrak{g}}$ is the Lie algebra of $\tilde{G}$.
\end{cor}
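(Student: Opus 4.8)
The plan is to assemble Corollary~\ref{modif} directly from the pieces already in place, so the argument is essentially a matter of chaining three facts. First I would invoke Theorem~\ref{metodo}: since $\mathcal{A}(\Ad_G(\Gamma))$ is connected by hypothesis, and by the Borel density theorem (and Theorem~6.11 of \cite{Raghunathan} if necessary) we may write $\mathcal{A}(\Ad_G(G)) = \T_c \, \mathcal{A}(\Ad_G(\Gamma))$ with $\T_c$ a maximal compact torus of $\mathcal{A}(\Ad_G(G))$, the theorem produces a simply connected normal subgroup $\tilde G \trianglelefteq \T_c \ltimes G$ with $\mathcal{A}(\Ad_{\tilde G}(\tilde G)) = \mathcal{A}(\Ad_{\tilde G}(\Gamma))$. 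By construction $\Gamma$ sits inside $\tilde G$ as a lattice (the modification $\sigma$ kills only the complementary subtorus $\SSS_c$ and leaves $\Gamma$ untouched, since $\SSS_c$ is complementary to a torus containing the compact part of $\mathcal{A}(\Ad_G(\Gamma))$), so $\tilde G/\Gamma$ is again a solvmanifold.

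Next, since $\mathcal{A}(\Ad_{\tilde G}(\tilde G)) = \mathcal{A}(\Ad_{\tilde G}(\Gamma))$, the pair $(\tilde G,\Gamma)$ satisfies the Mostow condition, so by \cite{Mostow2} and \cite[Corollary 7.29]{Raghunathan} one has $H^*(\tilde G/\Gamma) \cong H^*(\tilde{\mathfrak g})$, where $\tilde{\mathfrak g}$ is the Lie algebra of $\tilde G$. Finally I would argue that $\tilde G/\Gamma$ is diffeomorphic to $G/\Gamma$: the two groups $G$ and $\tilde G$ are both simply connected and diffeomorphic (the modification changes only the multiplication/the action of $\RR$, not the underlying manifold $\RR \times \RR^n$), and they share the same lattice $\Gamma$ as an abstract group; hence by Theorem~\ref{diffeo} the compact quotients $G/\Gamma$ and $\tilde G/\Gamma$ are diffeomorphic, which in particular gives $H^*(G/\Gamma) \cong H^*(\tilde G/\Gamma)$. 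Stringing the three isomorphisms together yields $H^*(G/\Gamma) = H^*(\tilde G/\Gamma) = H^*(\tilde{\mathfrak g})$, as claimed.

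The only real subtlety — and the step I would be most careful about — is checking that $\Gamma$ genuinely is a lattice in $\tilde G$ and that $\tilde G/\Gamma$ and $G/\Gamma$ have isomorphic fundamental group, so that Theorem~\ref{diffeo} applies verbatim. This is where the precise construction of $\tilde G$ via $\sigma$ matters: one must verify that the homomorphism $\sigma\colon G \to \SSS_c$ vanishes on $\Gamma$ (equivalently, that $\Ad(\Gamma)$ projects trivially to $\SSS_c$), which holds precisely because $\SSS_c$ was chosen complementary to a maximal compact torus of $\mathcal{A}(\Ad_G(\tilde\Gamma))$ inside $\T_c$. Granting this, $\Gamma \subset G$ maps isomorphically onto a discrete cocompact subgroup of $\tilde G$, and the identification of $\pi_1$ is immediate. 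Everything else is bookkeeping with results already quoted in the excerpt, so I would keep the write-up short and refer to \cite{CF2} for the details of the modification.
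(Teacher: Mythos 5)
Your proposal is correct and follows essentially the same route as the paper: apply Theorem~\ref{metodo} to obtain $\tilde G$ satisfying the Mostow condition with $\Gamma$ (hence $H^*(\tilde G/\Gamma)\cong H^*(\tilde{\mathfrak g})$), and then use that $\tilde G$ is simply connected and diffeomorphic to $G$ together with Theorem~\ref{diffeo} to identify $\tilde G/\Gamma$ with $G/\Gamma$. Your extra remark on checking that $\sigma$ is trivial on $\Gamma$ (so that $\Gamma$ remains a lattice in $\tilde G$) is a point the paper delegates to \cite{CF2}, but it is exactly the right thing to verify.
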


Observe that the lattice $\Gamma$ is not modified and, indeed, as remarked, $G/\Gamma$ is an Eilenberg-MacLane space, so that its topology is determined by its fundamental group $\Gamma$ only.

\begin{remark}
The Lie algebra  $\tilde {\mathfrak g}$ of $ \tilde G$ can be identified by
$$
\tilde {\mathfrak g} = \{ (X_{\mathfrak s}, X) \mid X \in {\mathfrak  g} \}
$$
with  Lie bracket:
$$
[( X_{\mathfrak s}, X), (Y_{\mathfrak s}, Y)] = (0, [X, Y] - \ad(X_{\mathfrak s})  (Y) +\ad( Y_{\mathfrak s}) (X)).
$$
where $X_{\mathfrak s}$ the image  $\sigma_* (X)$, for $X \in {\mathfrak g}$. (see \cite[Proposition 6.1]{CF2})
\end{remark}

\medskip

In the general case for a lattice $\Gamma$, the method runs as follows. Given $M = G / \Gamma$, there exists a  finite covering space  $\tilde M =  G / \tilde \Gamma$, i.e., $\Gamma/ \tilde \Gamma$ is a  finite group, such that  for $\tilde \Gamma$ it holds that ${\mathcal A}(\Ad_G (\tilde \Gamma)$ is connected.

Thus $H^* (G /  \Gamma) \cong H^* (G /  \tilde \Gamma)^{\Gamma/\tilde \Gamma}$ (the invariants by the action of the finite group $\Gamma/\tilde \Gamma$). 

\smallskip

\begin{remark}\label{incl}
There is a natural injection $H^*(\cg) \hookrightarrow H^* (G /  \Gamma)$, \cite[Theorem 3.2.10]{OT}.  Hence cohomology classes in $H^*(\cg)$ correspond to cohomology classes of invariant differential forms.
\end{remark}
\medskip

\subsection{Six dimensional almost abelian Lie groups}\label{six}

We are interested in six dimensional, unimodular %indecomposable 
almost abelian Lie groups which are not completely solvable. There are eleven such Lie groups that can admit a lattice and their Lie algebras are the following \cite{bock}:
\begin{description}[leftmargin=2.8cm,
style=nextline]
\itemsep3pt \parskip3pt \parsep0pt 
\item[{\rm $ \mathfrak{g}_{6.8}^{a,b,c,p}$}] $[X_1,X_6]=aX_1, \; [X_2,X_6]=bX_2, \; [X_3,X_6]=cX_3, \; [X_4,X_6]=pX_4-X_5,$ \\ $ [X_5,X_6]=X_4+pX_5, \qquad  a+b+c+2p=0, \quad 0< |c| \leq |b| \leq |a| $. 
\item[{\rm$ \mathfrak{g}_{6.9}^{a,b,p}$}] $[X_1,X_6]=aX_1, \; [X_2,X_6]=bX_2, \; [X_3,X_6]=X_2+bX_3, $ \\ $[X_4,X_6]=pX_4-X_5, \; [X_5,X_6]=X_4+pX_5, \qquad  a+2b+2p=0, \quad a\neq 0 $.
\item[{\rm$ \mathfrak{g}_{6.10}^{a,-\frac{3}{2}a}$}] $[X_1,X_6]=aX_1, \; [X_2,X_6]=X_1+aX_2, \; [X_3,X_6]=X_2+aX_3, $ \\ $[X_4,X_6]=-\frac{3}{2}aX_4-X_5,\; [X_5,X_6]=X_4-\frac{3}{2}aX_5$.
\item[{\rm$ \mathfrak{g}_{6.11}^{a,p,q,s}$}] $[X_1,X_6]=aX_1, \; [X_2,X_6]=pX_2-x_3, \; [X_3,X_6]=X_2+pX_3, $ \\ $ [X_4,X_6]=qX_4-sX_5,\;  [X_5,X_6]=sX_4+qX_5, \quad  a+2p+2q=0, \quad as \neq 0 $.
\item[{\rm$ \mathfrak{g}_{6.12}^{-4p,p}$}] $[X_1,X_6]=-4pX_1, \; [X_2,X_6]=pX_2-X_3, \; [X_3,X_6]=X_2+pX_3, $ \\ $[X_4,X_6]=X_2+pX_4-X_5,\; [X_5,X_6]=X_3+X_4+pX_5, \qquad  p \neq 0 $.
\item[{\rm$ \mathfrak{g}_{5.13}^{-1-2q,q,r}\oplus \mathbb{R}$}] $[X_1,X_5]=X_1, \; [X_2,X_5]=(-1-2q)X_2, \; [X_3,X_5]=qX_3-rX_4,$ \\ $ [X_4,X_5]=rX_3+qX_4,\quad  q \neq -\frac{1}{2},\quad r \neq 0, \quad -1 \leq q \leq 0 $. 
\item[{\rm$ \mathfrak{g}_{5.14}^{0}\oplus \mathbb{R}$}] $ [X_2,X_5]=X_1, \; [X_3,X_5]=-X_4,\; [X_4,X_5]=X_3$.
\item[{\rm $ \mathfrak{g}_{5.17}^{p,-p,r}\oplus \mathbb{R}$}] $[X_1,X_5]=pX_1-X_2, \; [X_2,X_5]=X_1+pX_2, \; [X_3,X_5]=-pX_3-rX_4, $ \\ $[X_4,X_5]=rX_3-pX_4, \quad r \neq 0$.
\item[{\rm$ \mathfrak{g}_{5.18}^{0}\oplus \mathbb{R}$}] $[X_1,X_5]=-X_2, \; [X_2,X_5]=X_1, \; [X_3,X_5]=X_1-X_4,\; [X_4,X_5]=X_2+X_3 $.
\item[{\rm$ \mathfrak{g}_{4.6}^{-2p,p}\oplus \mathbb{R}^2$}] $[X_1,X_4]=-2pX_1, \; [X_2,X_4]=pX_2-X_3, \; [X_3,X_4]=X_2+pX_3, \quad  p>0 $.
\item[{\rm$ \mathfrak{g}_{3.5}^{0}\oplus \mathbb{R}^3$}] $[X_1,X_3]=-X_2, \; [X_2,X_3]=X_1 $.
\end{description}

\smallskip

Next, we apply Proposition~\ref{bocklattice} to determine for which values of $t=\bar t$,  $\varphi(\bar t)=\exp (\bar t \, \ad_{X_6})$ determines  a lattice $\Gamma_{\bar t}$ in $G$.

Note in particular that as consequence of Proposition~\ref{bocklattice}, both the characteristic polynomial and the minimal polynomial of $\exp (\bar t \, \ad_{X_{n+1}})$ must have integer coefficients.

\smallskip

To perform the computations we use the Maple software.

To illustrate the method, we develop in detail the case of $G_{6.8}^{a,b,c,p}$, writing down the results in the other cases.
\bigskip

\noindent $\bullet$ {\large{$G_{6.8}^{a,b,c,p}$}}: 
From the structure equations of $\cg_{6.8}^{a,b,c,p}$ (Section~\ref{cohom}), we get
\begin{displaymath}
 \exp(t \, \ad_{X_6})=
\left( \begin{array}{ccccc}
e^{t(-b-c-2p)} & 0 & 0 & 0 & 0 \\
0 & e^{tb} & 0 & 0 & 0 \\
0 & 0 & e^{tc} & 0 & 0 \\
0 & 0 & 0 & e^{tp}\cos t & e^{tp}\sin t \\
0 & 0 & 0 & -e^{tp}\sin t & e^{tp}\cos t
\end{array} \right)
\end{displaymath}
The eigenvalues of $t \, \ad_{X_6}$ are 
$$
\pm t \,  i, \; t \,  b,\; t \,  c, -(b+c)t \, \, ,
$$
so the Mostow condition does not hold for $t=\bar t$ a rational multiple of $\pi$.
To apply Corollary~\ref{modif} we need to have ${\mathcal A} (\Ad_G (\Gamma_{\bar t}))$ connected. Using the same arguments as in the proof of Proposition~\ref{gorba}, (see \cite{gor}) one can see this is the case for $\bar t = 2 \pi$.
Indeed, using the Jordan decomposition into semisimple and nilpotent parts, one gets that the only blocks whose algebraic closures are not in general connected are the subgroups given by the exponentials of the roots of the complex eigenvalues. In this case they are the cyclic subgroups  $\left( \begin{array}{cc}
\cos (n \bar t) & \sin (n \bar t) \\
-\sin (n \bar t) & \cos (n \bar t)
\end{array} \right)
$, $n \in \ZZ$, for $t=\bar t$ a rational multiple of $\pi$.
The above cyclic subgroups are connected only if it they are trivial, i.e., for $\bar t = 2 \pi$.

\smallskip

Let us consider then $\Gamma_{\bar t}$ for $\bar t=2 \pi$.

Setting $e^{2\pi b}=w, \; e^{2\pi c}=v, \; e^{-2\pi p}=k$,  we have
\begin{displaymath}
\exp(2\pi \, \ad_{X_6}) =
\left( \begin{array}{ccccc}
\frac{k^2}{wv} & 0 & 0 & 0 & 0 \\
0 & w & 0 & 0 & 0 \\
0 & 0 & v & 0 & 0 \\
0 & 0 & 0 & \frac{1}{k} & 0 \\
0 & 0 & 0 & 0 & \frac{1}{k}
\end{array} \right)
\end{displaymath}
Its minimal polynomial is 
{\small
\begin{eqnarray*}
\MinPol(x)& = & k-\frac{k^3wv+wk^2+k^2v+v^2w^2}{wvk}x+\frac{vk^3+wk^3+w^2v^2k+k^2+wv^2+w^2v}{wvk}x^2+\\
& & -\frac{k^3+kwv^2+kw^2v+w}{wvk}x^3+x^4 
\end{eqnarray*}
}
So, it can have integer coefficients only if $k \in \mathbb{Z}$.

We set $w+v=r, \; wv=s$ and the coefficients $p_i$ of $x^i$ in $\MinPol(x)$ become:
\[
\begin{array}{l}
p_1=\dfrac{k^3s+k^2r+s^2}{ks}=k^2+\dfrac{k^2r+s^2}{ks}\, , \\
p_2= \dfrac{k^3r+kr^2+k^2+rs}{ks}\, ,  \qquad \quad p_3= \dfrac{k^3+krs+s}{ks}\, .\end{array}
\]
Hence $p_1 \in \mathbb{Z}\,$ if and only if $\,q_1=\frac{k^2r+s^2}{ks} \in \mathbb{Z}$ and $p_2-kq_1=\frac{k^2+rs}{ks}$.
If $p_1,p_2 \in \mathbb{Z}$ then $h:=p_2-kq_1 \in \mathbb{Z}$. $s=\frac{k^2}{hk-r}$, then $p_3=\frac{hk^2+1}{k}= hk+\frac{1}{k}$.  So $p_3 \in \mathbb{Z}\,$ if and only if $\frac{1}{k} \in \mathbb{Z} $, but $k \in \mathbb{Z}$, so $k=1$ and $p=0$.

\emph{Therefore for $p\neq 0$, $\Gamma_{2\pi}$ is not a lattice}.

\smallskip

\emph{Next we check the existence of a lattice for $p=0$.} The characteristic polynomial of $\exp(2\pi \, \ad_{X_6})$ has coefficients
\[
\begin{array}{l}
a_0=-1\quad a_1=2+\dfrac{r+s^2}{s}\quad a_2=-1-\dfrac{2s^2+2r+rs+1}{s}=-1-2\dfrac{s^2+r}{s}-\dfrac{rs+1}{s}\\[-8pt]\\
a_3=1+\dfrac{2rs+2+s^2+r}{s}= 1+2\dfrac{rs+1}{s} +\dfrac{s^2+r}{s}\qquad a_4=-2-\dfrac{rs+1}{s}\end{array}
\]
So $a_1,\, a_2, \, a_3, \,a_4 \in \mathbb{Z}$ if and only if $\dfrac{s^2+r}{s}, \, \dfrac{rs+1}{s} \in \mathbb{Z}$ and we must check that the solutions are such that $w$ and $v$ are positive. To this goal we consider the system
\begin{equation}\label{sys}
\left\{ \begin{array}{l}
\dfrac{s^2+r}{s}=h_1 \\[-8pt]\\
\dfrac{rs+1}{s}=h_2 \\[-8pt]\\
r>0\\[-8pt]\\
0<s\leq \frac{r^2}{4}
\end{array} \right.
\end{equation}
that admits solutions for some values of the integers $h_1$ and $h_2$ (for example for $h_1=5, h_2=6$). In particular we can not accept the solutions $\{s=r-1\}$, because they correspond to $b=0$ or $c=0$ and $\{s=1\}$, because it corresponds to $a=0$.

\medskip 
Thus, for $p=0$, we can find values of $b$ and $c$ (and $a=-b-c$) such that the characteristic polynomial of $\exp(2\pi  \ad_{X_6})$ has integer coefficients and we can check by direct computation that $\exp(2 \pi \, \ad_{X_6})$ is conjugate to 
$\left( \begin{array}{ccccc}
0 & 0 & 1 & 0 & 0 \\
1 & 0 & -h_1 & 0 & 0 \\
0 & 1 & h_2 & 0 & 0 \\
0 & 0 & 0 & 1 & 0 \\
0 & 0 & 0 & 0 & 1
\end{array} \right)$. 
Therefore, \emph{for some choice of the parameters $b$ and $c$, $\Gamma_{2\pi}$ is a lattice}. We denote the group $G_{6.8}^{a,b,c,0}$ for the above choices of the parameters $a, b, c$ by $G_{6.8}^{p=0}$ for short.

\medskip
Next we verify the Mostow condition: the eigenvalues of $2\pi \ad_{X_6}$ are 
$$
\pm 2\pi i, \; 2\pi b,\; 2\pi c,\newline -(b+c)2\pi\, ,
$$
so we can easily find a linear combination in $\mathbb{Q}$ that gives $\pi i$. Hence, by Proposition~\ref{gorba} the Mostow condition does not hold. 

To compute the cohomology we have then to apply the modification method.

The Lie group $G_{6.8}^{p=0}$ is defined by the map
\begin{displaymath}
\exp(t \, \ad_{X_6})=
\left( \begin{array}{ccccc}
e^{t(-b-c)} & 0 & 0 & 0 & 0 \\
0 & e^{tb} & 0 & 0 & 0 \\
0 & 0 & e^{tc} & 0 & 0 \\
0 & 0 & 0 & \cos t & \sin t \\
0 & 0 & 0 & -\sin t & \cos t
\end{array} \right)
\end{displaymath}
By definition the subtorus $\SSS_c$ is the compact part of the $\mathbb{C}$-diagonalizable one, that is the product of $\SSS_c$ and the $\mathbb{R}$-diagonalizable torus, so it is just the circle give by the block 
$\left( \begin{array}{cc}
\cos t & \sin t \\
-\sin t & \cos t
\end{array} \right)
$ and then $\tilde{G}_{6.8}^{p=0}$ is defined by
$\left( \begin{array}{ccccc}
e^{t(-b-c)} & 0 & 0 & 0 & 0 \\
0 & e^{tb} & 0 & 0 & 0 \\
0 & 0 & e^{tc} & 0 & 0 \\
0 & 0 & 0 & 1 & 0 \\
0 & 0 & 0 & 0 & 1
\end{array} \right)$.

Hence, the structure constants of $\tilde{\cg}_{6.8}^{p=0}$ are $$[X_1,X_6]=-(b+c)X_1, \; [X_2,X_6]=bX_2, \; [X_3,X_6]=cX_3.$$
Thus \emph{$\tilde{\mathfrak{g}}_{6.8}^{p=0}$ is isomorphic to the decomposable solvable Lie algebra $\cg_{4.5}^{k, -k-1}\oplus \RR^2$ (for some $k$)}, cf. \cite[Appendix A]{bock}.

By Corollary~\ref{modif} the cohomology of $G_{6.8}^{p=0}/\Gamma_{2\pi}$ is given by the cohomology groups of the Lie algebra $\tilde {\mathfrak{g}}_{6.8}^{p=0}$, with $(\tilde {\mathfrak{g}}_{6.8}^{p=0})^*= \langle \alpha^1,..., \alpha^6\rangle$, where $\alpha^i$ are the dual forms of $X_i$ ($i=1, \dots, 6$). Thus they are:
\begin{displaymath}
\begin{array}{l}
H^1(G_{6.8}^{p=0}/\Gamma_{2\pi})=H^1(\mathfrak{\tilde{g}}_{6.8}^{p=0})=\langle \alpha^4,\,\alpha^5,\,\alpha^6\rangle \\ [-8pt]\\
H^2(G_{6.8}^{p=0}/\Gamma_{2\pi})=H^2(\mathfrak{\tilde{g}}_{6.8}^{p=0})=\langle \alpha^{45},\,\alpha^{46},\,\alpha^{56}\rangle \\[-8pt]\\
H^3(G_{6.8}^{p=0}/\Gamma_{2\pi})=H^3(\mathfrak{\tilde{g}}_{6.8}^{p=0})=\langle \alpha^{123},\,\alpha^{456}\rangle
\end{array}
\end{displaymath}

Here and in the sequel, for the sake of symplicity, we do not use any special symbol for the cohomology class, writing down one of its representatives.
\smallskip

Next, \emph{let us investigate if there are integer values $k$ such that $\Gamma_{2\pi/k}$ is a lattice in $G_{6.8}^{p=0}$}.
\begin{displaymath}
\exp(2\pi/k\, \ad_{X_6}) =
\left( \begin{array}{ccccc}
e^{2\pi(-b-c)/k} & 0 & 0 & 0 & 0 \\
0 & e^{2\pi b/k} & 0 & 0 & 0 \\
0 & 0 & e^{2\pi c/k} & 0 & 0 \\
0 & 0 & 0 & \cos 2\pi /k & \sin 2\pi /k \\
0 & 0 & 0 & -\sin 2\pi /k & \cos 2\pi /k
\end{array} \right)
\end{displaymath}
We set $e^{2\pi b/k}=w, \; e^{2\pi c/k}=v, \;\cos 2\pi /k=u/2 $, and $w+v=r, \; wv=s$. Then the coefficients of the characteristic polynomial of $\exp(2\pi/k\, \ad_{X_6})$ become:
\[
\begin{array}{l}
a_1= \dfrac{us+r+s^2}{s}=u+\dfrac{r+s^2}{s} \qquad a_2= -1-\dfrac{ur+us^2+1+rs}{s} \\
a_3=1+ \dfrac{u+urs+s^2+r}{s} \qquad a_4=-\dfrac{1+rs+us}{s}=-\dfrac{1+rs}{s}-u
\end{array}
\]
Thus $a_2=-ua_1+a_4+u+u^2-1$ and $a_3=-ua_4+a_1-u-u^2+1$, so if $a_1,a_2,a_3,a_4 \in \mathbb{Z}$, then $a_1+a_4$ and $a_2+a_3$ are integer and so $u \in \mathbb{Q}$. Therefore, if $\cos 2\pi /k$ is not rational, then $\Gamma_{2\pi /k}$ is not a lattice.

If $u \in \mathbb{Q}$, then the characteristic polynomial has integer coefficients if and only if the same system \eqref{sys} as the one found for  $\bar t=2\pi$ admits a solution. Again by direct computation we check that the matrix $A$ is conjugate with $\exp(\bar t \, \ad_{X_6})$, for every $\bar t$ such that $\cos \bar t= \pm 1, 0, \pm \frac{1}{2}$.

Hence \emph{we have a lattice in $G_{6.8}^{p=0}$ for 
$\bar t=\frac{2\pi}{k}$ such that $\cos \bar t= \pm 1, 0, \pm \frac{1}{2}$.}

We compute the cohomology groups by finding the invariants of the action of $\Gamma_{\bar t}/\Gamma_{2\pi}$ for $\bar t=\frac{\pi}{2},\frac{\pi}{3}, \pi $. For the other cases we get the same result for the cohomology.

For $\bar t=\frac{\pi}{2}$, let 
\begin{displaymath}
\psi_2:=\exp(\pi/2\, \ad_{X_6})^t =
\left( \begin{array}{ccccc}
e^{\pi(-b-c)/2} & 0 & 0 & 0 & 0 \\
0 & e^{\pi b/2} & 0 & 0 & 0 \\
0 & 0 & e^{\pi c/2} & 0 & 0 \\
0 & 0 & 0 & 0 & -1 \\
0 & 0 & 0 & 1 & 0
\end{array} \right) .\end{displaymath}
Hence
\[
\begin{array}{ll}
\psi_2 \alpha^4=\alpha^5, \quad \psi_2 \alpha^5=-\alpha^4, \quad \psi_2 \alpha^6=\alpha^6 & \Longrightarrow\quad H^1(G_{6.8}^{p=0}/\Gamma_{\pi /2})=\langle \alpha^6 \rangle\, , \\[-8pt]\\
\psi_2 \alpha^{45}=\alpha^{45}, \quad \psi_2 \alpha^{46}=\alpha^{56}, \quad \psi_2 \alpha^{56}=-\alpha^{46} & \Longrightarrow\quad H^2(G_{6.8}^{p=0}/\Gamma_{\pi /2})=\langle \alpha^{45} \rangle\, ,\\[-8pt]\\
\psi_2 \alpha^{123}=\alpha^{123}, \quad \psi_2 \alpha^{456}=\alpha^{456}& \Longrightarrow\quad H^3(G_{6.8}^{p=0}/\Gamma_{\pi /2})=\langle \alpha^{123}, \;\alpha^{456} \rangle\, .
\end{array}\\[-8pt]\\
\]
Similarly, one gets
\[
\begin{array}{l}
 H^1(G_{6.8}^{p=0}/\Gamma_{\pi /3})=\langle \alpha^6 \rangle\, ,\\[-8pt]\\
H^2(G_{6.8}^{p=0}/\Gamma_{\pi /3})=\langle \alpha^{45} \rangle\, ,\\[-8pt]\\
H^3(G_{6.8}^{p=0}/\Gamma_{\pi /3})=\langle \alpha^{123}, \;\alpha^{456} \rangle\, .
\end{array}\\[-8pt]
\qquad
\begin{array}{l}
H^1(G_{6.8}^{p=0}/\Gamma_{\pi})=\langle \alpha^6 \rangle\\[-8pt]\\
H^2(G_{6.8}^{p=0}/\Gamma_{\pi})=\langle \alpha^{45} \rangle\\[-8pt]\\
H^3(G_{6.8}^{p=0}/\Gamma_{\pi })=\langle \alpha^{123}, \;\alpha^{456} \rangle 
\end{array}\\[-8pt]
\]

\medskip

\noindent $\bullet$ {\large{$G_{6.9}^{a,b,p}$}}: 
 again $\exp (t \, \ad_{X_6})$ has a pair complex conjugate roots. Thus one would get a lattice $\Gamma_{\bar t}$ for which the Mostow condition does not hold and 
 ${\mathcal A} (\Ad_G (\Gamma_{\bar t}))$ is connected for $\bar t=2 \pi$. However, 
 one can show that \emph{there is no lattice for $\bar t=2\pi$}.

\medskip

\noindent $\bullet$ {\large{$G_{6.10}^{a}$}}: $\exp (t \, \ad_{X_6})$ has a pair complex conjugate roots. \emph{If $a \neq 0$ there is no lattice $t=2\pi$, but $\Gamma_{2\pi}$ is a lattice for $G_{6.10}^0$. }
The eigenvalues of $2\pi \ad_{X_6}$ are $\pm 2\pi i$, so the Mostow condition does not hold. Then $\tilde{\cg}_{6.10}^{0}$ has structure constants $[X_2,X_6]=X_1, \; [X_3,X_6]=X_2$ and \emph{is isomorphic to the nilmanifold $\cg_{4.1}\oplus \RR^2$}, cf. \cite[Appendix A]{bock}. The cohomology groups of $G_{6.10}^{0}/\Gamma_{2\pi}$ are
 \begin{displaymath}
\begin{array}{l}
H^1(G_{6.10}^{0}/\Gamma_{2\pi})=H^1(\mathfrak{\tilde{g}}_{6.10}^{0})=\langle \alpha^3,\,\alpha^4,\,\alpha^5,\,\alpha^6\rangle \\ [-8pt]\\
H^2(G_{6.10}^{0}/\Gamma_{2\pi})=H^2(\mathfrak{\tilde{g}}_{6.10}^{0})=\langle \alpha^{16},\,\alpha^{23},\,\alpha^{34},\,\alpha^{35},\,\alpha^{45},\,\alpha^{46},\,\alpha^{56}\rangle \\[-8pt]\\
H^3(G_{6.10}^{0}/\Gamma_{2\pi})=H^3(\mathfrak{\tilde{g}}_{6.10}^{0})=\langle \alpha^{123},\,\alpha^{126},\,\alpha^{146},\,\alpha^{156},\,\alpha^{234},\,\alpha^{235},\,\alpha^{345},\,\alpha^{456}\rangle
\end{array}\\[-8pt]
\end{displaymath}

\smallskip

\noindent The subgroups $\Gamma_{2\pi/k}$ ($k \in \ZZ$) are also lattices if and only if $2\cos\left( \frac{2\pi}{k} \right) \in \mathbb{Z}$.
  In all these cases we have
\begin{displaymath}
\begin{array}{l}
H^1(G_{6.10}^{0}/\Gamma_{\bar t})=\langle \alpha^3,\,\alpha^6 \rangle \\[-8pt]\\
H^2(G_{6.10}^{0}/\Gamma_{\bar t})=\langle \alpha^{16},\,\alpha^{23},\,\alpha^{45} \rangle
\\[-8pt]\\
H^3(G_{6.10}^{0}/\Gamma_{\bar t})=\langle \alpha^{123}, \;\alpha^{126}, \;\alpha^{345}, \;\alpha^{456} \rangle .
\end{array}\\[-8pt]
\end{displaymath}

\smallskip

\begin{remark} 
The lattice $\Gamma_{\pi}$ was found in \cite[Proposition 6.18]{bock}. In part (ii) it is stated that if there is a lattice in $G_{6.10}^{0}$ such that the corresponding solvmanifold satisfies
$b_1 = 2$ and $b_2 = 3$, then it is symplectic and not formal. Here we show that, for  example, $\Gamma_{\pi}$, is such a lattice. We will deal about symplectic structures and formally later (Section~\ref{sympl}).
\end{remark}

\noindent $\bullet$ {\large{{$G_{6.11}^{a,p,q,s}$}}: 
in this case from the structure equations of $\cg_{6.11}^{a,p,q,s}$ we get
\begin{displaymath}
\ad_{X_6}=
\left( \begin{array}{ccccc}
-2(p+q) & 0 & 0 & 0 & 0 \\
0 & p & 1 & 0 & 0 \\
0 & -1 & p & 0 & 0 \\
0 & 0 & 0 & q & s \\
0 & 0 & 0 & -s & q
\end{array} \right)\, .
\end{displaymath}
Thus we have two non diagonal blocks with a couple complex conjugate roots. Hence there could be more situations for which the Mostow condition does not hold.

\smallskip

\noindent (i): \emph{If $s \in \mathbb{Q}$ (say $s=\frac{s_1}{s_2}$), then $\Gamma_{2\pi s_2}$ would be the right choice of parameter to have ${\mathcal A} (\Ad_G (\Gamma_{2\pi s_2}))$ connected. But $\Gamma_{2\pi s_2}$ is not a lattice in $G_{6.11}^{a,p,q,s}$.}
\begin{proof} Exponentiating we get
\begin{displaymath}
\exp(2\pi s_2\, \ad_{X_6}) =
\left( \begin{array}{ccccc}
e^{-4(p+q)\pi s_2} & 0 & 0 & 0 & 0 \\
0 & e^{2p\pi s_2} & 0 & 0 & 0 \\
0 & 0 & e^{2p\pi s_2} & 0 & 0 \\
0 & 0 & 0 & e^{2q\pi s_2} & 0 \\
0 & 0 & 0 & 0 & e^{2q\pi s_2}
\end{array} \right)\, .
\end{displaymath}
We set $ e^{-2(p+q)\pi s_2}=\alpha$ and $ e^{-2q\pi s_2}+e^{-2p\pi s_2}=\beta$, so its minimal polynomial is
\begin{eqnarray*}
\MinPol(x) = -\alpha+\frac { \left( \alpha^2\beta+1 \right) x}{\alpha}-\frac 
{ \left( \alpha^3+\beta \right) {x}^{2}}{\alpha}+x^3
\end{eqnarray*}
so it can have integer coefficients only if $\alpha \in \mathbb{Z}$. Then $\frac{\alpha^2\beta+1}{\alpha}=\beta+\frac{1}{\alpha} \in \mathbb{Z}$ implies $\beta  \in \mathbb{Q}$.
But then $\frac{\alpha^3+\beta}{\alpha}=\alpha^2+\frac{\beta}{\alpha} \in \mathbb{Z}$ implies $\frac{\beta}{\alpha} \in \mathbb{Z}$ and so $\beta \in \mathbb{Z}$.
Therefore if $\alpha$ and $\beta$ are not both integer we have no lattice $\Gamma_{2\pi s_2}$.
Suppose $\alpha,\beta \in \mathbb{Z}$, then $\beta+\frac{1}{\alpha} \in \mathbb{Z}$ only if $\alpha=1$ that is $a=p+q=0$, but this value is not acceptable, so $\Gamma_{2\pi s_2}$ is not a lattice.
\end{proof}

\noindent (ii): 
\emph{
If $s$ is irrational  then one can look for lattices $\Gamma_{\bar t}$ with ${\mathcal A} (\Ad_G (\Gamma_{\bar t}))$ connected for $\bar t =2 \pi$. For $p \neq 0$ there is no lattice for $\bar t=2\pi$, but $\Gamma_{2\pi}$ is a lattice for $G_{6.11}^{a,0,q,s}$ for some value of $q$ and $s$ (recall, $a+2p+2q=0$). We denote the group $G_{6.11}^{a,0,q,s}$ for these choices of the parameters by $G_{6.11}^{p=0}$
} 

The Mostow condition does not hold and by modification method we get that 
$\mathfrak{\tilde{g}}_{6.11}^{p=0}$ has structure equations $[X_1,X_6]=-2qX_1, \; [X_4,X_6]=qX_4-sX_5, \; [X_5,X_6]=sX_4+qX_5 $, so it is isomorphic to $\cg_{4.6}^{-2k, k}\oplus \RR^2$ for some $k$, cf.  \cite[Appendix A]{bock}. The Lie algebra $\mathfrak{\tilde{g}}_{6.11}^{p=0}$ is not completely solvable, but satisfies the Mostow condition for our choice of lattice. The cohomology groups are 
\begin{displaymath}
\begin{array}{l}
H^1(G_{6.11}^{p=0}/\Gamma_{2\pi})=H^1(\mathfrak{\tilde{g}}_{6.11}^{p=0})=\langle \alpha^2,\,\alpha^3,\,\alpha^6\rangle \\ [-8pt]\\
H^2(G_{6.11}^{p=0}/\Gamma_{2\pi})=H^2(\mathfrak{\tilde{g}}_{6.11}^{p=0})=\langle \alpha^{23},\,\alpha^{26},\,\alpha^{36} \rangle \\[-8pt]\\
H^3(G_{6.11}^{p=0}/\Gamma_{2\pi})=H^3(\mathfrak{\tilde{g}}_{6.11}^{p=0})=\langle \alpha^{145},\,\alpha^{236}\rangle
\end{array}\\[-8pt]
\end{displaymath}

\smallskip

\noindent \emph{The subgroups $\Gamma_{2\pi/k}$ ($k \in \ZZ$) are also lattices
 if and only if $2\cos\left( \frac{2\pi}{k} \right) \in \mathbb{Z}$.} In all these cases we have
\begin{displaymath}
\begin{array}{l}
H^1(G_{6.11}^{p=0}/\Gamma_{\bar t})=\langle \alpha^6\rangle \\ [-8pt]\\
H^2(G_{6.11}^{p=0}/\Gamma_{\bar t})=\langle \alpha^{23}\rangle \\[-8pt]\\
H^3(G_{6.11}^{p=0}/\Gamma_{\bar t})=\langle \alpha^{145},\,\alpha^{236}\rangle \, .
\end{array} \\[-8pt]
\end{displaymath}

\medskip

\noindent $\bullet$ {\large{{$G_{6.12}^{-4p,p}$}}: 
\emph{ one can show that there is no lattice for $t=2\pi$}.

\medskip

\noindent $\bullet$ {\large{{$G_{5.13}^{-1-2q,q,r}\times\mathbb{R}$}}: 
 we must consider two different cases: if $r \in \mathbb{R}\smallsetminus \mathbb{Q}$, $\mathcal{A}(\Ad(\Gamma_{2\pi}))$ is connected, instead if $r=\frac{r_1}{r_2} \in \mathbb{Q}$, $\mathcal{A}(\Ad(\Gamma_{2\pi r_2}))$ is connected, but \emph{one can show that there is no lattice for both these values of $t$}.
 
 \medskip
 
\noindent $\bullet$ {\large{{$G_{5.14}^{0}\times\mathbb{R}$}}: \emph{for $\bar t=2\pi$ $\mathcal{A}(\Ad(\Gamma_{\bar t}))$ is connected and $\Gamma_{\bar t}$ is a lattice, then the only non-zero bracket of the Lie algebra $\tilde{\mathfrak{g}}\cong \mathfrak{g}_{3.1}\oplus\mathbb{R}^3$ is $[X_2,X_5]=X_1$} and the cohomology groups are
\begin{displaymath}
\begin{array}{rl}
H^1(G_{5.14}^{0}\times\mathbb{R}/\Gamma_{2\pi})=H^1(\mathfrak{\tilde{g}})=&\langle\alpha^2,\,\alpha^3,\,\alpha^4,\,\alpha^5,\,\alpha^6\rangle\, , \\ [-8pt]\\
H^2(G_{5.14}^{0}\times\mathbb{R}/\Gamma_{2\pi})=H^2(\mathfrak{\tilde{g}})=&\langle  \alpha^{12},\,\alpha^{15},\,\alpha^{23},\,\alpha^{24},\,\alpha^{26},\,\alpha^{34},\,\alpha^{35},\,\alpha^{36},\,\alpha^{45},\,\alpha^{46},\,\alpha^{56}\rangle\, , \\[-8pt]\\
H^3(G_{5.14}^{0}\times\mathbb{R}/\Gamma_{2\pi})=H^3(\mathfrak{\tilde{g}})=&\langle \alpha^{123},\,\alpha^{124},\,\alpha^{125},\,\alpha^{126},\,\alpha^{135},\,\alpha^{145},\,\alpha^{156},\,\alpha^{234},\,\alpha^{236},\\
&\alpha^{246}, \,\alpha^{345},\,\alpha^{346},\,\alpha^{356},\,\alpha^{456}\rangle \, .
\end{array}\\[-8pt]
\end{displaymath}
\emph{The subgroups $\Gamma_{2\pi/k}$ ($k \in \ZZ$) are again lattices 
if and only if $2\cos\left( \frac{2\pi}{k} \right) \in \mathbb{Z}$}, in particular for all these values we have the same invariants and the cohomology groups are:
\begin{displaymath}
\begin{array}{l}
H^1(G_{5.14}^{0}\times\mathbb{R}/\Gamma_{\frac{2\pi}{k}})=\langle\alpha^2,\,\alpha^5,\,\alpha^6\rangle\, , \\ [-8pt]\\
H^2(G_{5.14}^{0}\times\mathbb{R}/\Gamma_{\frac{2\pi}{k}})=\langle  \alpha^{12},\,\alpha^{15},\,\alpha^{26},\,\alpha^{34},\,\alpha^{56}\rangle\, , \\[-8pt]\\
H^3(G_{5.14}^{0}\times\mathbb{R}/\Gamma_{\frac{2\pi}{k}})=\langle \alpha^{125},\,\alpha^{126},\,\alpha^{156},\,\alpha^{234},\,\alpha^{345},\,\alpha^{346}\rangle\, .
\end{array}
\end{displaymath}

We note that these groups are isomorphic to the cohomology groups of the Lie algebra $\mathfrak{g}_{5.14}^{0}\oplus \RR$.

\medskip

\noindent $\bullet$ {\large{{$G_{5.17}^{p,-p,r}\times\mathbb{R}$}}:  
again we must consider two different cases: \emph{if $r \in \mathbb{R}\smallsetminus \mathbb{Q}$, $\mathcal{A}(\Ad(\Gamma_{2\pi}))$ is connected, but we have no lattice, instead if $r=\frac{r_1}{r_2} \in \mathbb{Q}$, $\mathcal{A}(\Ad(\Gamma_{2\pi r_2}))$ is connected and $\Gamma_{2\pi r_2}$ is a lattice if and only if $e^{2\pi pr_2}+e^{-2\pi pr_2}=h \in \mathbb{Z}$.}

So for these values of $p$ and $r_2$ the Lie algebra $\tilde{\mathfrak{g}}$ is $\mathbb{R}^6$ for $p=0$, while for $p\neq 0$ the non zero brackets in $\tilde \cg$ are given by $$[X_1,X_5]=pX_1,\;[X_2,X_5]=pX_2,\;[X_3,X_5]=-pX_3,\;[X_4,X_5]=-pX_4.$$ 
Thus if $p\neq 0$, $\tilde{\mathfrak{g}}$ is isomorphic to $\mathfrak{g}_{5.7}^{1,-1,-1}\oplus \RR$. The cohomology groups of the solvmanifold $G_{5.17}^{p,-p,r}\times\mathbb{R}/\Gamma_{2\pi}$ are 
\begin{displaymath}
\begin{array}{l}
H^1(G_{5.17}^{p,-p,r}\times\mathbb{R}/\Gamma_{2\pi r_2})=H^1(\mathfrak{\tilde{g}})=\langle \alpha^5,\,\alpha^6\rangle\, , \\ [-8pt]\\
H^2(G_{5.17}^{p,-p,r}\times\mathbb{R}/\Gamma_{2\pi r_2})=H^2(\mathfrak{\tilde{g}})=\langle  \alpha^{13},\,\alpha^{14},\,\alpha^{23},\,\alpha^{24},\,\alpha^{56}\rangle\, , \\[-8pt]\\
H^3(G_{5.17}^{p,-p,r}\times\mathbb{R}/\Gamma_{2\pi r_2})=H^3(\mathfrak{\tilde{g}})=\langle \alpha^{135},\,\alpha^{136},\,\alpha^{145},\,\alpha^{146},\,\alpha^{235},\,\alpha^{236},\,\alpha^{245},\,\alpha^{246}\rangle\, .
\end{array}
\end{displaymath}

To study other lattices we consider the case $r \in \mathbb{Z}$ and then $t=\frac{2\pi}{k}$: the characteristic polynomial of $\exp(t \ad_{X_5})$ has coefficients that depends strongly on the relation between $k$ and $r$, so it is difficult to determine in general for which values of $k$ they are integer.

For this reason we consider only particular values of $k$:

\noindent (a) $k=2$: if $r$ is even we have a lattice if and only if $h-2=n^2$ for some $n \in \mathbb{Z}$ and the cohomology groups of the solvmanifold are:

\begin{description}[leftmargin=2cm,
style=nextline]
\itemsep3pt \parskip3pt \parsep0pt 
\item[{\rm if $p \neq 0$}]
$H^1(G_{5.17}^{p,-p,r}\times\mathbb{R}/\Gamma_{\pi})=\langle \alpha^5, \alpha^6 \rangle, \\ [-8pt]\\H^2(G_{5.17}^{p,-p,r}\times\mathbb{R}/\Gamma_{\pi})=\langle \alpha^{56} \rangle, \\ [-8pt]\\H^3(G_{5.17}^{p,-p,r}\times\mathbb{R}/\Gamma_{\pi})=\{0\};$
\item[{\rm if $p=0$}]\smallskip
$H^1(G_{5.17}^{p,-p,r}\times\mathbb{R}/\Gamma_{\pi})=\langle \alpha^3, \alpha^4, \alpha^5, \alpha^6 \rangle, \\ [-8pt]\\
H^2(G_{5.17}^{p,-p,r}\times\mathbb{R}/\Gamma_{\pi})=\langle \alpha^{12}, \alpha^{34}, \alpha^{35}, \alpha^{36}, \alpha^{45}, \alpha^{46}, \alpha^{56} \rangle, \\[-8pt]\\
H^3(G_{5.17}^{p,-p,r}\times\mathbb{R}/\Gamma_{\pi})=\langle \alpha^{123}, \alpha^{124}, \alpha^{125}, \alpha^{126}, \alpha^{345}, \alpha^{346}, \alpha^{356}, \alpha^{456} \rangle .$
\end{description}
We note that for $p\neq 0$ these groups are isomorphic to the cohomology groups of the Lie algebra.

\smallskip

If $r$ is odd we have a lattice if there exists an integer $n$ such that $h+2=n^2$ and

\begin{description}[leftmargin=2.5cm,
style=nextline]
\itemsep3pt \parskip3pt \parsep0pt 
\item[{\rm if $p \neq 0$}]
$H^1(G_{5.17}^{p,-p,r}\times\mathbb{R}/\Gamma_{\pi})=\langle \alpha^5, \alpha^6 \rangle, \\ [-8pt]\\
H^2(G_{5.17}^{p,-p,r}\times\mathbb{R}/\Gamma_{\pi})=\langle \alpha^{13}, \alpha^{14}, \alpha^{23}, \alpha^{24}, \alpha^{56} \rangle, \\[-8pt]\\
 H^3(G_{5.17}^{p,-p,r}\times\mathbb{R}/\Gamma_{\pi})=\langle \alpha^{135}, \alpha^{136}, \alpha^{145}, \alpha^{146}, \alpha^{235}, \alpha^{236}, \alpha^{245}, \alpha^{246} \rangle;$
 \smallskip
\item[{\rm if $p=0$}]
$H^1(G_{5.17}^{p,-p,r}\times\mathbb{R}/\Gamma_{\pi})=\langle \alpha^5, \alpha^6 \rangle, \\ [-8pt]\\
H^2(G_{5.17}^{p,-p,r}\times\mathbb{R}/\Gamma_{\pi})=\langle \alpha^{12}, \alpha^{13}, \alpha^{14}, \alpha^{23}, \alpha^{24}, \alpha^{34},  \alpha^{56} \rangle, \\ [-8pt]\\
{}\!\!\!\!\!\!\!\!\begin{array}{lll} &H^3(G_{5.17}^{p,-p,r}\times\mathbb{R}/\Gamma_{\pi})=&\langle \alpha^{125}, \alpha^{126}, \alpha^{135}, \alpha^{136}, \alpha^{145}, \alpha^{146}, \alpha^{235}, \alpha^{236}, \\ && \alpha^{245}, \alpha^{246}, \alpha^{345}, \alpha^{346} \rangle . \end{array}$ 
\end{description}

\medskip

\noindent (b) $k=4$: if $r\equiv 0 \mod 4$ then the characteristic polynomial has integer coefficients if and only if $p=0$ and for this value our matrix is integer, so there is the lattice.

\smallskip

\noindent $H^1(G_{5.17}^{p,-p,r}\times\mathbb{R}/\Gamma_{\frac{\pi}{2}})=\langle \alpha^3, \alpha^4, \alpha^5, \alpha^6 \rangle, \\ [-8pt]\\
H^2(G_{5.17}^{p,-p,r}\times\mathbb{R}/\Gamma_{\frac{\pi}{2}})=\langle \alpha^{12}, \alpha^{34}, \alpha^{35}, \alpha^{36}, \alpha^{45}, \alpha^{46}, \alpha^{56} \rangle, \\[-8pt]\\
H^3(G_{5.17}^{p,-p,r}\times\mathbb{R}/\Gamma_{\frac{\pi}{2}})=\langle \alpha^{123}, \alpha^{124}, \alpha^{125}, \alpha^{126}, \alpha^{345}, \alpha^{346}, \alpha^{356}, \alpha^{456} \rangle .$

\smallskip

If $r\equiv 1 \mod 4$ again we have a lattice only if $h+2=n^2$ for some $n \in \mathbb{Z}$ and

\begin{description}[leftmargin=2cm,
style=nextline]
\itemsep3pt \parskip3pt \parsep0pt 
\item[{\rm if $p \neq 0$}]
$H^1(G_{5.17}^{p,-p,r}\times\mathbb{R}/\Gamma_{\frac{\pi}{2}})=\langle \alpha^5, \alpha^6 \rangle, \\[-8pt]\\
 H^2(G_{5.17}^{p,-p,r}\times\mathbb{R}/\Gamma_{\frac{\pi}{2}})=\langle \alpha^{13}+\alpha^{24},\,\alpha^{14}-\alpha^{23},\,\alpha^{56} \rangle, \\[-8pt]\\
  H^3(G_{5.17}^{p,-p,r}\times\mathbb{R}/\Gamma_{\frac{\pi}{2}})=\langle \alpha^{135}+\alpha^{245},\,\alpha^{145}-\alpha^{235},\,\alpha^{146}-\alpha^{236},\,\alpha^{136}+\alpha^{246}\rangle ;$\smallskip
\item[{\rm if $p=0$}]
$H^1(G_{5.17}^{p,-p,r}\times\mathbb{R}/\Gamma_{\frac{\pi}{2}})=
\langle \alpha^5, \alpha^6 \rangle, \\ [-8pt]\\
H^2(G_{5.17}^{p,-p,r}\times\mathbb{R}/\Gamma_{\frac{\pi}{2}})=\langle \alpha^{12},\alpha^{13}+\alpha^{24},\,\alpha^{14}-\alpha^{23},\, \alpha^{34},  \alpha^{56} \rangle, \\
[-8pt]\\
 {}\!\!\!\begin{array}{ll} H^3(G_{5.17}^{p,-p,r}\times\mathbb{R}/\Gamma_{\frac{\pi}{2}})= &\langle \alpha^{125}, \alpha^{126}, \alpha^{135}+\alpha^{245},\,\alpha^{145}-\alpha^{235},\,\alpha^{146}-\alpha^{236},\,\\&\alpha^{136}+\alpha^{246}, \alpha^{345}, \alpha^{346} \rangle. \end{array}$
\end{description}
For $r=1$ they are isomorphic to the cohomology groups of the Lie algebra.
\smallskip

If $r\equiv 2 \mod 4$ then again there is a lattice only if $p=0$ and we have an isomorphism with the invariant cohomology groups:
$$
\begin{array}{ll}
H^1(G_{5.17}^{p,-p,r}\times\mathbb{R}/\Gamma_{\frac{\pi}{2}})=\langle \alpha^5, \alpha^6 \rangle, \\[-8pt]\\
 H^2(G_{5.17}^{p,-p,r}\times\mathbb{R}/\Gamma_{\frac{\pi}{2}})=\langle \alpha^{12}, \alpha^{34}, \alpha^{56} \rangle, \\[-8pt]\\
  H^3(G_{5.17}^{p,-p,r}\times\mathbb{R}/\Gamma_{\frac{\pi}{2}})=\langle \alpha^{125}, \alpha^{126}, \alpha^{345}, \alpha^{346} \rangle .
  \end{array}
  $$
If $r\equiv 3 \mod 4$ we get same coefficients as if $r\equiv 1 \mod 4$ and then we have a lattice only if $h+2=n^2$ for some $n \in \mathbb{Z}$.

\begin{description}[leftmargin=2cm,
style=nextline]
\itemsep3pt \parskip3pt \parsep0pt 
\item[{\rm  if $p \neq 0$}]
$H^1(G_{5.17}^{p,-p,r}\times\mathbb{R}/\Gamma_{\frac{\pi}{2}})=\langle \alpha^5, \alpha^6 \rangle, \\[-8pt]\\
 H^2(G_{5.17}^{p,-p,r}\times\mathbb{R}/\Gamma_{\frac{\pi}{2}})=\langle \alpha^{14}+\alpha^{23},\,\alpha^{13}-\alpha^{24},\,\alpha^{56} \rangle, \\ [-8pt]\\
 H^3(G_{5.17}^{p,-p,r}\times\mathbb{R}/\Gamma_{\frac{\pi}{2}})=\langle \alpha^{145}+\alpha^{235},\alpha^{135}-\alpha^{245},\,\alpha^{136}-\alpha^{246},\,\,\alpha^{146}+\alpha^{236}\rangle$;\smallskip
\item[{\rm  if $p=0$}]
$H^1(G_{5.17}^{p,-p,r}\times\mathbb{R}/\Gamma_{\frac{\pi}{2}})=\langle \alpha^5, \alpha^6 \rangle, \\[-8pt]\\
H^2(G_{5.17}^{p,-p,r}\times\mathbb{R}/\Gamma_{\frac{\pi}{2}})=\langle \alpha^{12},\alpha^{14}+\alpha^{23},\,\alpha^{13}-\alpha^{24},\, \alpha^{34},  \alpha^{56} \rangle, \\ [-8pt]\\
{}\!\!{} \begin{array}{ll} H^3(G_{5.17}^{p,-p,r}\times\mathbb{R}/\Gamma_{\frac{\pi}{2}})=&\langle \alpha^{125}, \alpha^{126},\alpha^{145}+\alpha^{235},\alpha^{135}-\alpha^{245},\,\\
& \alpha^{136}-\alpha^{246},\,\alpha^{146}+\alpha^{236},\alpha^{345}, \alpha^{346} \rangle .
\end{array}$
\end{description}
Again we have the isomorphism with the invariant cohomology, but only for $r=-1$.

\medskip

\noindent $\bullet$ {\large{{$G_{5.18}^{0}\times\mathbb{R}$}}: for $t=2\pi$ $\mathcal{A}(\Ad(\Gamma_t))$ is connected, \emph{there is a lattice and $\tilde{\mathfrak{g}}\cong\mathfrak{g}_{5.1}\oplus\mathbb{R}$}, so
\begin{displaymath}
\begin{array}{ll}
H^1(G_{5.18}^{0}\times\mathbb{R}/\Gamma_{2\pi})=H^1(\mathfrak{\tilde{g}})=&\langle \alpha^3,\,\alpha^4,\,\alpha^5,\,\alpha^6\rangle \\ [-8pt]\\
H^2(G_{5.18}^{0}\times\mathbb{R}/\Gamma_{2\pi})=H^2(\mathfrak{\tilde{g}})=&\langle  \alpha^{13},\,\alpha^{15},\,\alpha^{14}+\alpha^{23},\,\alpha^{24},\,\alpha^{25},\,\alpha^{34},\,\alpha^{36},\,\alpha^{46},\,\alpha^{56}\rangle \\ [-8pt]\\
H^3(G_{5.18}^{0}\times\mathbb{R}/\Gamma_{2\pi})=H^3(\mathfrak{\tilde{g}})=&\langle \alpha^{123},\,\alpha^{125},\,\alpha^{134},\,\alpha^{135},\,\alpha^{136},\,\alpha^{146}+\alpha^{236},\,\alpha^{156},\\ 
&\alpha^{234},\, \alpha^{235},\,\alpha^{245}, \,\alpha^{246},\,\alpha^{256},\,\alpha^{346}\rangle
\end{array}
\end{displaymath}

\smallskip

\noindent Again we can have other lattices $\Gamma_{2\pi/k}$ only for $k=2,3,4,6$ and

\begin{description}[leftmargin=2.3cm,
style=nextline]
\itemsep3pt \parskip3pt \parsep0pt 
\item[{\rm $k=2$}]
$H^1(G_{5.18}^{0}\times\mathbb{R}/\Gamma_{\pi})=\langle \alpha^5,\,\alpha^6\rangle \\ [-8pt]\\
H^2(G_{5.18}^{0}\times\mathbb{R}/\Gamma_{\pi})=\langle \alpha^{13},\,\alpha^{14}+\alpha^{23},\,\alpha^{24},\,\alpha^{34},\,\alpha^{56}\rangle \\[-8pt]\\
H^3(G_{5.18}^{0}\times\mathbb{R}/\Gamma_{\pi})=\langle \alpha^{125},\,\alpha^{135},\,\alpha^{136},\,\alpha^{146}+\alpha^{236},\,\alpha^{235},\,
\alpha^{245}, \,\alpha^{246},\,\alpha^{346}\rangle$
\item[{\rm $k=3,4,6$}]
$H^1(G_{5.18}^{0}\times\mathbb{R}/\Gamma_{\frac{2\pi}{k}})=\langle \alpha^5,\,\alpha^6\rangle \\ [-8pt]\\
H^2(G_{5.18}^{0}\times\mathbb{R}/\Gamma_{\frac{2\pi}{k}})=\langle \alpha^{13}+\alpha^{24},\,\alpha^{34},\,\alpha^{56}\rangle \\[-8pt]\\
H^3(G_{5.18}^{0}\times\mathbb{R}/\Gamma_{\frac{2\pi}{k}})=\langle \alpha^{125},\,\alpha^{135}+\alpha^{245},\,\alpha^{136}+\alpha^{246},\,\alpha^{346}\rangle$
\end{description}
The last case is isomorphic to the cohomology of the Lie algebra.

\medskip

\noindent $\bullet$ {\large{{$G_{4.6}^{-2p,p}\times\mathbb{R}^2$}}: \emph{for $t=2\pi$ $\mathcal{A}(\Ad(\Gamma_t))$ is connected, but there is not a lattice}.

\medskip

\noindent $\bullet$ {\large{{$G_{3.5}^{0}\times\mathbb{R}^3$}}: \emph{for $t=2\pi$ $\mathcal{A}(\Ad(\Gamma_t))$ is obviously connected and we have a lattice, in particular $\tilde \cg_{3.5}^{0}\times\mathbb{R}^3 \cong\RR^6$, so $G_{3.5}^{0}\times\mathbb{R}^3/\Gamma_{2\pi}$ is diffeomorphic to a 6-torus}.

Again, {the subgroups $\Gamma_{2\pi/k}$ ($k \in \ZZ$) are again lattices 
if and only if $2\cos\left( \frac{2\pi}{k} \right) \in \mathbb{Z}$}, in particular for all these values  the cohomology groups are always isomorphic to the invariant ones:
$$
\begin{array}{l}
H^1(G_{3.5}^{0}\times\mathbb{R}^3/\Gamma_{\frac{2\pi}{k}})=\langle \alpha^3, \alpha^4, \alpha^5, \alpha^6 \rangle, \\[-8pt]\\
 H^2(G_{3.5}^{0}\times\mathbb{R}^3/\Gamma_{\frac{2\pi}{k}})=\langle \alpha^{12}, \alpha^{34}, \alpha^{35}, \alpha^{36}, \alpha^{45}, \alpha^{46}, \alpha^{56} \rangle, \\[-8pt]\\
 H^3(G_{3.5}^{0}\times\mathbb{R}^3/\Gamma_{\frac{2\pi}{k}})=\langle \alpha^{123}, \alpha^{124}, \alpha^{125}, \alpha^{126}, \alpha^{345}, \alpha^{346}, \alpha^{356}, \alpha^{456} \rangle .
 \end{array}
 $$
We list the Lie algebras $\cg$ and the ones obtained by deformation $\tilde \cg$ in Table 2.

 \begin{table}[h!]\label{liealg}
\caption{Deformed Lie algebras}
\begin{center}
\setlength{\extrarowheight}{6pt}
\begin{tabular}{ | c |  c | }
 \hline
$\cg$  & $\tilde \cg$\\ [-15pt]
{} & {} \\
\hline \hline
  $\cg_{6.8}^{p=0}$ & $\cg_{4.5}\oplus \RR^2$ \\[-15pt]
{} & {} \\ \hline
  $\cg_{6.10}^{a=0}$ & $\cg_{4.1}\oplus \RR^2$\\[-15pt]
{} & {} \\ \hline
$\cg_{6.11}^{p=0}$ & $\cg_{4.6}^{-2k, k}\oplus \RR^2$\\[-15pt]
{} & {} \\ \hline
$\cg_{5.14}^{0}\oplus \mathbb{R}$ & $\mathfrak{g}_{3.1}\oplus\mathbb{R}^3$\\[-15pt]
{} & {} \\ \hline
$\cg_{5.17}^{p,-p,r}\oplus \mathbb{R}$ & $\begin{array}{cr}
\RR^6, &p =0\\[-15pt]
{} & {} \\
\mathfrak{g}_{5.7}^{1,-1,-1}\oplus \RR, & p \neq 0\\[-15pt]
{} & {} \end{array}$\\
\hline
$\cg_{5.18}^{0}\oplus\mathbb{R}$ & $\mathfrak{g}_{5.1}\oplus\mathbb{R}$\\ [-15pt]
{} & {} \\ \hline
$\cg_{3.5}^{0}\oplus \mathbb{R}^3$ & $\RR^6$ \\ [-15pt]
{} & {} \\
\hline
\end{tabular}
\end{center}
\label{default}
\end{table}%

\section{Symplectic structures and Lefschetz properties}\label{sympl}

Let us study symplectic structures on the solvmanifolds we consider. 

\smallskip

In general, since $\tilde{G}/\Gamma$ is diffeomorphic to $G/\Gamma$, (Theorem \ref{diffeo}), symplectic structures on the modified Lie algebra $\tilde{\mathfrak{g}}$   yield not $G$-invariant symplectic structures $\tilde \omega$ on $G/\Gamma$ (where $\Gamma$ is the lattice from which ${\mathcal A} (\Ad_G (\Gamma))$ is connected).
Recall from the previous Sections that these manifolds $G/\Gamma$ cover solvmanifolds $G/\bar \Gamma$.
Observe that in general the symplectic forms $\tilde \omega$ are defined only on the covering $G/\Gamma$ and (in general) not on the covered manifolds $G/\bar \Gamma$.

\smallskip

Let us start with the indecomposable case.
We know from the classification of symplectic structures on six dimensional solvable Lie algebras (see \cite{Macri}) that only the solvmanifolds $G_{6.10}^{a=0}/\Gamma_t$ have invariant symplectic structures (inherited by $\mathfrak{g}_{6.10}^{a=0}$). The generic invariant symplectic form is 
\begin{equation}\label{6.10} \omega=\omega_{1,6}\alpha^{16}+\omega_{2,3}\alpha^{23}+\omega_{2,6}\alpha^{26}+\omega_{3,6}\alpha^{36}+\omega_{4,5}\alpha^{45}+\omega_{4,6}\alpha^{46}+\omega_{5,6}\alpha^{56} \end{equation}
 with $\omega_{1,6}\omega_{2,3}\omega_{4,5} \neq 0$ ($\det \tilde \omega \neq 0$).

Next, let us look for non invariant symplectic structures.

In the case of $G_{6.8}^{p=0}$ we have that also $\tilde{\mathfrak{g}}_{6.8}^{p=0}$ does not admit symplectic structures. 

\smallskip

As for $G_{6.10}^{a=0}$, symplectic forms on the modified Lie algebra $\tilde{\mathfrak{g}}_{6.10}^{a=0}$ (which is isomorphic to $\cg_{4.1} \oplus \RR^2$) yield  
the (in general non $G_{6.10}^{a=0}$-invariant) symplectic form on $G_{6.10}^{a=0}/\Gamma_{2\pi}$
$$
\tilde{\omega}=\omega+\eta\, ,
$$
where $\omega$ is given by \eqref{6.10} and $\eta={\omega_{3,4}\alpha^{34}+\omega_{3,5}\alpha^{35}}$ is the ``new part'' (recall that the invariant cohomology, i.e. the cohomology of the Lie algebra $\cg_{6.10}^{a=0}$, is contained in the de Rham cohomology of $G_{6.10}^{a=0}/\Gamma_{2\pi}$, cf. Remark~\ref{incl}).

\smallskip

Again $\tilde{\mathfrak{g}}_{6.11}^{p=0}$ does not admit symplectic structures.

\smallskip

Let us consider now the decomposable case. 

The Lie algebra $\mathfrak{g}=\mathfrak{g}_{5.14}^{0}\oplus\mathbb{R}$ admits the symplectic form $$\omega =\omega_{1,2}\alpha^{12}+\omega_{1,5}\alpha^{15}+\omega_{2,5}\alpha^{25}+\omega_{2,6}\alpha^{26}+\omega_{3,4}\alpha^{34}+\omega_{3,5}\alpha^{35}+\omega_{4,5}\alpha^{45}+\omega_{5,6}\alpha^{56}\, , $$ with $\det (\omega_{i,j})\neq 0$,  
$$\tilde{\omega}=\omega + \omega_{2,3}\alpha^{23}+\omega_{2,4}\alpha^{24}+{\omega_{3,6}\alpha^{36}}+{\omega_{4,6}\alpha^{46}}\; \; {\text{with }} \det (\tilde{\omega}_{i,j})\neq 0\, .
$$
The Lie algebra $\mathfrak{g}=\mathfrak{g}_{5.17}^{p,-p,r}\oplus\mathbb{R}$ admits symplectic structures only for particular values of the parameters $p$ and $r$, \cite{Macri}, but

\noindent If $p=0$:
$\tilde{\mathfrak{g}} $ is isomorphic to $\mathbb{R}^6$ so it is symplectic.

\noindent If $p\neq 0$:
	 $\tilde{\mathfrak{g}} $ has generic symplectic form
$$\tilde{\omega}=\omega_{1,3}\alpha^{13}+\omega_{1,4}\alpha^{14}+\omega_{1,5}\alpha^{15}+\omega_{2,3}\alpha^{23}+\omega_{2,4}\alpha^{24}+\omega_{2,5}\alpha^{25}+\omega_{3,5}\alpha^{35}+\omega_{4,5}\alpha^{45}+\omega_{5,6}\alpha^{56}$$ with $\det (\tilde{\omega}_{i,j})\neq 0$.  

\smallskip

The Lie algebra $\mathfrak{g}=\mathfrak{g}_{5.18}^{0}\oplus\mathbb{R}$ admits the symplectic form
$$\omega =\omega_{1,3}(\alpha^{13}+\alpha^{24})+\omega_{1,5}\alpha^{15}+\omega_{2,5}\alpha^{25}+\omega_{3,4}\alpha^{34}+\omega_{3,5}\alpha^{35}+\omega_{4,5}\alpha^{45}+\omega_{5,6}\alpha^{56}\, ,$$ with $\omega_{1,3} \omega_{5,6} \neq 0$, but the solvmanifold $G_{5.18}^{0}\times\mathbb{R}/\Gamma_{2\pi}$ has also a not invariant symplectic structure inherited by $\tilde{\mathfrak{g}} $:
$$\tilde{\omega}=\omega+{\omega_{1,3}\alpha^{13}+\omega_{1,4}(\alpha^{14}+\alpha^{23})}+{\omega_{3,6}\alpha^{36}}+{\omega_{4,6}\alpha^{46}}\, ,
$$ with $\det (\tilde{\omega}_{i,j})\neq 0$.
 
 \smallskip
 
The Lie algebra $\mathfrak{g}=\mathfrak{g}_{3.5}^{0}\oplus\mathbb{R}^3$ admits symplectic structures, but $\tilde{\mathfrak{g}} $ is isomorphic to $\mathbb{R}^6$ so the solvmanifold $G_{3.5}^{0}\times\mathbb{R}^3/\Gamma_{2\pi}$ admits obviously also a not invariant symplectic structure.

\begin{definition}\label{half}
An $SU(3)$ structure  six-dimensional manifold $M$ (i.e., an $SU(3)$ reduction of the
frame bundle of $M$) defines a non-degenerate
2-form $\omega$, an almost-complex structure $J$, and a complex volume form $\Psi$. The SU(3) structure
is called \emph{half-flat} if $\omega \wedge \Omega$ and the real part of $\Psi$ are closed \cite{CS}.
If in addition $\omega$ is closed, the half-flat structure is called \emph{symplectic}.
\end{definition}

\begin{proof}[Proof of Proposition~\ref{not-inv}]
We use the classification in \cite{FMOU} together with the above discussion on symplectic forms on the solvmanifolds, possibly coming from forms on the modified Lie algebra (cf. Table 2).

By \cite[Proposition 4.2]{FMOU}, there is no $4\oplus 2$ decomposable Lie algebra admitting symplectic half-flat structures. Hence the symplectic forms on $G_{6.10}^{a=0}/\Gamma_{2\pi}$ we found are not  half-flat (recall that the modified Lie algebra $\tilde{\mathfrak{g}}_{6.10}^{a=0}$  is isomorphic to $\cg_{4.1} \oplus \RR^2$). 

By \cite[Proposition 4.3]{FMOU}, the $5 \oplus 1$ decomposable Lie algebras having symplectic half-flat structures 
are $\mathfrak{g}_{5.1}\oplus \mathbb{R}$ (which is isomorphic to $\ch_3$ in the notation of \cite{FMOU}),  $\mathfrak{g}_{5.17}^{p,-p,r}\oplus \mathbb{R}$  for $p\geq 0$ and $r=1$ and $\mathfrak{g}_{5.7}^{p,q,r}\oplus \mathbb{R}$ for $p=q=-1$ and $r=1$. 
\end{proof}

Next we consider the hard Lefschetz property.  Recall that for a symplectic manifold $(M^{2n},\omega)$ the \textit{hard Lefschetz property} holds if for every $0\leq k \leq n$ the homomorphism 
\begin{displaymath}
\begin{array}{cccc}
L^{n-k}: & H^k(M)& \rightarrow & H^{2n-k}(M) \\
& \left[ \alpha \right]  & \mapsto & [\omega^{n-k}\wedge \alpha]
\end{array}
\end{displaymath}
 is an isomorphism.
 
More in general, $(M^{2n},\omega)$ is called $s$-\textit{Lefschetz}  if $L^{n-k}$ 
is an isomorphism for all $k \leq s$ \cite{FM1}. The property of being $0$-Lefschetz is equivalent to \emph{cohomologically symplectic}, i.e., there exists $\omega\in H^{2}(M)$ such that $\omega^{n}\not=0$.

We need to consider $G_{6.10}^{a=0}/\Gamma_{\bar t}, \,G_{5.14}^{0}\times \mathbb{R}/\Gamma_{\bar t},\,G_{5.18}^{0}\times \mathbb{R}/\Gamma_{\bar t},\,G_{5.17}^{p,-p,r}\times \mathbb{R}/\Gamma_{\bar t},\,G_{3.5}^{0}\times \mathbb{R}^3/\Gamma_{\bar t}$ (for the above choices of $\bar t$). 

As a start, we consider the generic (non invariant) symplectic form $\tilde{\omega}$ on $G/\Gamma$ with ${\mathcal A} (\Ad_G (\Gamma))$ connected.

\begin{prop}\label{HL}
The hard Lefschetz property does not hold for the symplectic form $\tilde{\omega}$ on $G_{6.10}^{a=0}/\Gamma_{2\pi}, \,G_{5.14}^{0}\times \mathbb{R}/\Gamma_{2\pi},\,G_{5.18}^{0}\times \mathbb{R}/\Gamma_{2\pi}$.
More in general, these solvmanifolds are 0-Lefschetz but not $1$- and $2$-Lefschetz.
\\The hard Lefschetz property holds for the symplectic form $\tilde{\omega}$ on $G_{5.17}^{p,-p,r}\times \mathbb{R}/\Gamma_{2\pi r_2} \,  (r=\frac{r_1}{r_2}\in \Q) ,\,G_{3.5}^{0}\times \mathbb{R}^3/\Gamma_{2\pi}$.
\end{prop}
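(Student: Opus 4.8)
The plan is to transfer the whole question to Chevalley--Eilenberg cohomology. By Corollary~\ref{modif}, for each of the solvmanifolds $G/\Gamma$ in the statement one has $H^*(G/\Gamma)\cong H^*(\tilde{\mathfrak g})$, and the generic non-invariant symplectic form $\tilde\omega$ written down in Section~\ref{sympl} is a closed invariant form on $\tilde G/\Gamma$ with explicit expression in the invariant coframe $\alpha^1,\dots,\alpha^6$; moreover bases of $H^k(\tilde{\mathfrak g})$ for $k=1,2,3$ (hence, by Poincar\'e duality, for $k=4,5$) are the ones already recorded in Section~\ref{cohom}. Since here $2n=6$, the hard Lefschetz property is equivalent to $2$-Lefschetz: one has to check that $L^3\colon H^0\to H^6$, $L^2\colon H^1\to H^5$ and $L\colon H^2\to H^4$ are isomorphisms, the map $L^0$ on $H^3$ being the identity. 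The first is automatic: $\tilde\omega$ is non-degenerate, so $\tilde\omega^3$ is a volume form, and as $G/\Gamma$ is compact, connected and orientable, $[\tilde\omega^3]\neq 0$ in $H^6\cong\RR$; thus all the solvmanifolds in question are $0$-Lefschetz.

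For the two remaining maps I would proceed generator by generator. Each of $L$ and $L^2$ is linear over the parameters $\omega_{i,j}$ of $\tilde\omega$, so, once compatible representatives of the classes in degrees $1,2,4,5$ are fixed, it is represented by a matrix whose entries are the $\omega_{i,j}$ (or linear combinations thereof), obtained by computing the wedge products $\tilde\omega\wedge(\cdot)$ and $\tilde\omega^2\wedge(\cdot)$ in $\Lambda^*(\tilde{\mathfrak g})^*$ and reducing modulo $d\bigl(\Lambda^{k-1}(\tilde{\mathfrak g})^*\bigr)$ using the structure equations of $\tilde{\mathfrak g}$ listed in Table~2 and Section~\ref{cohom}. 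For $G_{6.10}^{a=0}/\Gamma_{2\pi}$, $G_{5.14}^{0}\times\RR/\Gamma_{2\pi}$ and $G_{5.18}^{0}\times\RR/\Gamma_{2\pi}$ I expect to exhibit, in each case and independently of the admissible values of the $\omega_{i,j}$, a non-zero class in $H^1$ killed by $L^2$ and a non-zero class in $H^2$ killed by $L$: for a suitable generator $\alpha^i$ (resp.\ $\alpha^{ij}$) the form $\tilde\omega^2\wedge\alpha^i$ (resp.\ $\tilde\omega\wedge\alpha^{ij}$) turns out to be exact in the Chevalley--Eilenberg complex identically in the parameters, essentially because in these ``large'' modifications several of the twisting terms have been collapsed, so that the corresponding rows of the Lefschetz matrices vanish. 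This gives that these three solvmanifolds fail to be $1$-Lefschetz, hence also fail $2$-Lefschetz and hard Lefschetz.

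For $G_{5.17}^{p,-p,r}\times\RR/\Gamma_{2\pi r_2}$ and $G_{3.5}^{0}\times\RR^3/\Gamma_{2\pi}$ the situation is the opposite. In the subcases $\tilde{\mathfrak g}\cong\RR^6$ (all of $G_{3.5}^{0}\times\RR^3/\Gamma_{2\pi}$, and $G_{5.17}^{p,-p,r}\times\RR/\Gamma_{2\pi r_2}$ for $p=0$) the manifold is a torus and hard Lefschetz is classical, so only the case $p\neq 0$, where $\tilde{\mathfrak g}\cong\mathfrak g_{5.7}^{1,-1,-1}\oplus\RR$, needs work; there one computes that the determinants of the matrices of $L\colon H^2\to H^4$ and $L^2\colon H^1\to H^5$ in the generic $\tilde\omega$ are non-zero polynomials in the $\omega_{i,j}$, which therefore do not vanish for the generic symplectic form (and one pins this down by an explicit non-degenerate choice, e.g.\ with $\omega_{i,j}\in\{0,1\}$, at which they are non-zero). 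Hence $L$, $L^2$ and $L^3$ are all isomorphisms and hard Lefschetz holds.

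The main obstacle is bookkeeping rather than conceptual: for each $\tilde{\mathfrak g}$ one must fix representatives of the cohomology classes in all of degrees $1,2,4,5$ in a mutually compatible way, carry out the products $\tilde\omega\wedge(\cdot)$ and $\tilde\omega^2\wedge(\cdot)$, and reduce modulo exact forms; it is precisely the reduction modulo $d(\Lambda^{\bullet})$ that decides whether a row of the Lefschetz matrix vanishes, so this is the delicate point. As elsewhere in the paper, I would carry out and cross-check these finite computations with Maple.
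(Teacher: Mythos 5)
Your proposal follows essentially the same route as the paper: both reduce to the invariant cohomology of the modified Lie algebra $\tilde{\mathfrak g}$ and verify the maps $L$ and $L^2$ by direct computation, with $0$-Lefschetz coming from non-degeneracy of $\tilde\omega$ and the failures exhibited by explicit classes killed by (or, in the paper's version, generators of $H^4$ missing from the image of) the Lefschetz maps. If anything you are more careful than the paper on the positive part: the paper dismisses it with ``$\tilde{\mathfrak g}\cong\RR^6$ in both cases,'' which covers $G_{3.5}^{0}\times\RR^3$ and $G_{5.17}^{p,-p,r}\times\RR$ only for $p=0$, whereas you correctly flag that $p\neq 0$ (where $\tilde{\mathfrak g}\cong\mathfrak g_{5.7}^{1,-1,-1}\oplus\RR$) requires a separate check of the Lefschetz matrices.
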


\begin{proof}
We describe the proof of the first part only for the first solvmanifold because the other cases are quite similar and the proposition comes from direct computation of the morphisms $L^{n-k}$.
$G_{6.10}^{a=0}/\Gamma_{2\pi}$ is $0$-Lefschetz because it is cohomologically symplectic. 
By a direct computation, we find that for every $\alpha \in \Lambda^2\Omega(G/\Gamma)$, $\alpha^{1235}$ never appears in $\tilde{\omega} \wedge \alpha$, but it is a generator of $H^4(G/\Gamma)$, so $L^1:H^2 (M) \rightarrow H^4 (M)$ cannot be an isomorphism. This implies that $G_{6.10}^{a=0}/\Gamma_{2\pi}$ is not $2$-Lefschetz.
Moreover, $\tilde \omega^2 \wedge \alpha^3$ is cohomologous to zero, so $L^2:H^1 (M) \rightarrow H^5 (M)$ cannot be an isomorphism (recall that $\alpha^3$ is a generator of $H^1(G_{6.10}^{0}/\Gamma_{2\pi})$) and $G_{6.10}^{a=0}/\Gamma_{2\pi}$ is not $1$-Lefschetz.\\
The second part of the proposition is quite obvious because the Lie algebra $\tilde{\mathfrak{g}}$ is in both cases isomorphic to $\mathbb{R}^6$.
\end{proof}

If we consider the invariant symplectic form $\omega$, then one can see that the hard Lefschetz property holds for $(G_{5.17}^{0,0,r}\times \mathbb{R}/\Gamma_{\bar t},\omega)$ and $(G_{5.17}^{p,-p,\pm 1}\times \mathbb{R}/\Gamma_{\bar t},\omega)$ and it does not hold for $(G_{5.14}^{0}\times \mathbb{R}/\Gamma_{\bar t},\omega)$, $(G_{5.18}^{0}\times \mathbb{R}/\Gamma_{\bar t},\omega)$ \cite[Proposition 7.12]{bock} and for  $(G_{6.10}^{a=0}/\Gamma_{\bar t}, \omega)$ \cite[Proposition 7.9]{bock}. 

\section{Minimal Models and formality}\label{minimal}

We now compute the minimal model of the solvmanifolds we found. We use a method developed by Oprea and Tralle \cite{OT, OT2} that exploits the Mostow fibration.

\begin{theorem} \cite{OT, OT2}
Let $F \rightarrow E \rightarrow B$ be a fibration and let $U$ be the largest $\pi_1(B)$-submodule of $H^*(F,\mathbb{Q})$ on which $\pi_1(B)$ acts nilpotently. Suppose that $H^*(F)$ is a vector space of finite type and that $B$ is a nilpotent space, then in the Sullivan model of the fibration 

\begin{displaymath}
\xymatrix{\mathcal{A}(B) \ar[r]  & \mathcal{A}(E) \ar[r] & \mathcal{A}(F) \\
(\Lambda X, d_X) \ar[r] \ar[u] & (\Lambda (X\oplus Y), D)  \ar[u] \ar[r] & (\Lambda Y, d_Y) \ar[u]^\alpha }
\end{displaymath}
the differential graded algebra homomorphism $\alpha: (\Lambda Y, d_Y)  \to \mathcal{A}(F) $ induces an isomorphism $\alpha^*: H^*(\Lambda Y, d_Y)\rightarrow U$.
\end{theorem}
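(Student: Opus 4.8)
The plan is to produce the Koszul--Sullivan model from the fibration by a formal lifting, and then to identify $H^{*}(\Lambda Y,d_{Y})$ with $U$ in two moves: a cheap inclusion coming from minimality of the model, and the reverse inclusion together with injectivity of $\alpha^{*}$ coming from the Felix-Thomas analysis of the rational holonomy, the general case being reduced to the case $B=K(\pi,1)$ by a Postnikov argument. First I would apply the PL de Rham functor $\mathcal{A}(-)$ to $F\to E\to B$, obtaining $\mathcal{A}(B)\to\mathcal{A}(E)\to\mathcal{A}(F)$. Because $B$ is nilpotent of finite type it carries a genuine minimal model $(\Lambda X,d_{X})\xrightarrow{\simeq}\mathcal{A}(B)$ -- this is the only place the nilpotency of $B$ enters the construction itself -- and the standard step-by-step procedure produces a relative minimal Sullivan extension $(\Lambda X,d_{X})\hookrightarrow(\Lambda(X\oplus Y),D)$ equipped with a quasi-isomorphism onto $\mathcal{A}(E)$ over $\mathcal{A}(B)$. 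Dividing out the ideal generated by $X^{+}$ yields the fibre DGA $(\Lambda Y,d_{Y})$ and, by restriction of the model to the fibre over a point of $B$ (the universal property of the pull-back that defines the fibre), the canonical DGA map $\alpha\colon(\Lambda Y,d_{Y})\to\mathcal{A}(F)$ of the statement. No hypothesis on $F$ has been used so far.

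Relative minimality means $Y$ carries an exhaustive increasing filtration $Y(0)\subseteq Y(1)\subseteq\cdots$ with $D(Y(0))\subseteq\Lambda X$ and $D(Y(k))\subseteq\Lambda X\otimes\Lambda Y(k-1)$; in particular the linear part $Y\to X\otimes Y$ of $D$ -- the ``holonomy twist'' that the model records on the fibre -- is nilpotent for this filtration. Hence the action induced on $H^{*}(\Lambda Y,d_{Y})$ is nilpotent and $\alpha^{*}\colon H^{*}(\Lambda Y,d_{Y})\to H^{*}(F;\Q)$ necessarily takes values in the maximal nilpotent $\pi_{1}(B)$-submodule $U$; this is one inclusion. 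For the reverse inclusion and for injectivity of $\alpha^{*}$ I would reduce to the case $B=K(\pi,1)$, $\pi$ finitely generated nilpotent, by running the usual inductive machinery over a principal Postnikov refinement $B=\varprojlim B_{n}$ of the nilpotent space $B$: the higher stages ($k_{n}\geq 2$) leave a simply connected base, where the fibration is nilpotent, the spectral sequence of the relative model is the ordinary Serre one, and the fibre part is unchanged up to quasi-isomorphism, so all the content sits at the bottom, $\pi_{1}$-stage. There the Felix-Thomas theorem on the rational holonomy representation applies: the part of $H^{*}(F;\Q)$ that the Sullivan model over $K(\pi,1)$ sees is exactly the maximal nilpotent subrepresentation, neither less nor more. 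Since $H^{*}(F;\Q)$ is of finite type, the associated inverse limit stabilizes in each degree and no class of $U$ is dropped, so $\alpha^{*}$ is injective with image precisely $U$.

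The main obstacle is exactly this $\pi_{1}$-stage: showing that the model over $K(\pi,1)$, $\pi$ finitely generated nilpotent, extracts the maximal nilpotent \emph{sub}module $U$ of the fibre cohomology -- which in general lies strictly between the invariants $H^{*}(F;\Q)^{\pi}$ and the whole of $H^{*}(F;\Q)$ (it equals the latter precisely when the action is unipotent, which is the nilpotent-fibration case). This is the substantive input \cite{ft}, and it is where the hypothesis that $B$, hence $\pi_{1}(B)$, is nilpotent is really used; the remaining ingredients -- existence of relative minimal models, the quotient construction of $(\Lambda Y,d_{Y})$ and of $\alpha$, and the Serre-type spectral-sequence comparison at the simply connected stages -- are standard rational homotopy theory.
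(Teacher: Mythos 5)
This statement is quoted in the paper from \cite{OT,OT2} (ultimately from Felix--Thomas \cite{ft}) and the paper gives no proof of it, so there is no in-paper argument to compare yours against; I can only assess your sketch on its own terms. Your outline follows the standard route: build the relative minimal Sullivan extension over a minimal model of the nilpotent base, obtain $(\Lambda Y,d_Y)$ and $\alpha$ by dividing out the ideal generated by $X^{+}$, and then prove the two inclusions. The easy half is argued correctly: the filtration $Y(0)\subseteq Y(1)\subseteq\cdots$ forced by relative minimality makes the holonomy representation on $H^*(\Lambda Y,d_Y)$ locally nilpotent, and since $\alpha^*$ is $\pi_1(B)$-equivariant (a standard but nontrivial lemma you use implicitly and should at least flag), its image is a nilpotent submodule, hence lands in $U$. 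For the hard half you correctly localize the substantive content in \cite{ft}; since that is precisely the source the theorem is attributed to, deferring to it is legitimate and not circular, because your reduction passes from a general nilpotent base to the $K(\pi,1)$ case that \cite{ft} treats. The one genuinely thin spot is that reduction: the claim that the simply connected Postnikov stages ``leave the fibre part unchanged up to quasi-isomorphism'' and that ``all the content sits at the $\pi_1$-stage'' is exactly the comparison of the model's filtration spectral sequence with the Serre spectral sequence with local coefficients, and asserting it is not the same as carrying it out (Oprea--Tralle in fact argue directly via the differential Tor / Eilenberg--Moore description rather than by Postnikov induction). Also note that you invoke finite type of $B$ to get a minimal model, whereas the hypothesis in the statement is finite type of $H^*(F)$ only; nilpotency of $B$ alone suffices for the existence of the minimal model, so this is harmless but should be phrased accordingly. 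As a proof sketch consistent with the literature the proposal is sound; as a self-contained proof it is incomplete at the spectral-sequence comparison step.
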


In particular in the case of the Mostow fibration
$$
N / \Gamma_N = (N \Gamma) / \Gamma \hookrightarrow G / \Gamma
\longrightarrow G / (N \Gamma) = \T^k\, ,
$$
we can construct the minimal model $(\Lambda (X\oplus Y), D)$ of the solvmanifold using the models of of the base $\T^k$ (for almost abelian solvmanifolds $k=1$, i.e., we have a circle $S^1$) and the fibre $N / \Gamma_N$ (actually of its submodule $U$).

In general, finding $U$ is very difficult, but when the solvmanifold is almost nilpotent (in particular almost abelian), the monodromy action of $\ZZ \cong \pi_1(S^1)$ on $H^*(N / \Gamma_N)$ is exploited by the (transpose of) twist action that defines the semi-direct sum $\mathfrak{g}=\mathbb{R}\ltimes \mathfrak{n}$, that in our case is just $\exp (t \, \ad_{X_6})$ (see \cite[Theorems 3.7 and 3.8]{OT}). Unfortunately, with this method, in some of our examples  we cannot find  the model uniquely, because we can have different choices for the construction of $(\Lambda (X\oplus Y), D)$. However, we can identify the right one, knowing the cohomology groups from the previous computations.

We write down the computations explicitly only for some of the solvmanifolds, trying to show all the possible different cases, and for the others we only give the minimal model.

\medskip

%\begin{description}
\noindent $\bullet$ {\large{$G_{6.8}^{p=0}/\Gamma_{2\pi}$}}:
\begin{displaymath}
U= \left\{ \begin{array}{l}
\langle \alpha^4, \alpha^5 \rangle \subset H^1(\mathfrak{n}) \\[-8pt]\\
\langle \alpha^{45} \rangle \subset H^2(\mathfrak{n}) \\[-8pt]\\
\langle \alpha^{123} \rangle \subset H^3(\mathfrak{n}) \\[-8pt]\\
\langle \alpha^{1234}, \alpha^{1235} \rangle \subset H^4(\mathfrak{n}) \\[-8pt]\\
\langle \alpha^{12345} \rangle = H^5(\mathfrak{n})
\end{array} \right. \, ,
\end{displaymath}
and  a minimal model for $U$ is $\mathcal{M}_U=(\Lambda (x_1,y_1,z_3),0)$. 

The minimal model of the base $S^1$ is $(\Lambda(A_1),0)$ and the minimal model of the solvmanifold is $\mathcal{M}= (\Lambda (A_1,x_1,y_1,z_3),0)$.

\smallskip

\noindent $\bullet$ {\large{$G_{6.8}^{p=0}/\Gamma_{\pi,\frac{\pi}{2},\frac{\pi}{3}}$}}:
$$\mathcal{M}= (\Lambda (A_1,x_2,\beta _3,y_3),D), \qquad DA=Dx=Dy=0, \; D\beta=x^2\, .
$$

\smallskip

\noindent $\bullet$ {\large{$G_{6.10}^{a=0}/\Gamma_{2\pi}$}}:
\begin{displaymath}
\begin{array}{lcr}
U=H^*(\mathfrak{n})  & \Rightarrow & \mathcal{M}_U= (\Lambda (x_1,y_1,z_1,p_1,q_1),0)\\
\end{array}
\end{displaymath}
The minimal model of the solvmanifold is $\mathcal{M}= (\Lambda (A_1,x_1,y_1,z_1,p_1,q_1),D)$, but we have 7 different choices for $D$:
\begin{enumerate}
\item $D\equiv 0$
\item $DA=Dx=Dy=Dz=Dp=0, \; Dq=Ax$
\item $DA=Dx=Dy=Dz=0 \; Dp=Ax, \; Dq=Ay$
\item $DA=Dx=Dy=Dz=0 \; Dp=Ax, \; Dq=Ap$
\item $DA=Dx=Dy=0 \; Dz=Ax \; Dp=Ay, \; Dq=Az$
\item $DA=Dx=Dy=0 \; Dz=Ax \; Dp=Az, \; Dq=Ap$
\item $DA=Dx=0 \; Dy=Ax \; Dz=Ay \; Dp=Az, \; Dq=Ap$
\end{enumerate}
Computing the cohomology groups of these commutative differential graded algebras (c.d.g.a.) and comparing with those of $G_{6.10}^{a=0}/\Gamma_{2\pi}$, we find that (4) is the right one.

\smallskip

\noindent $\bullet$ {\large{$G_{6.10}^{a=0}/\Gamma_{\pi,\frac{\pi}{2},\frac{\pi}{3}}$}}:
\begin{displaymath}
U= \left\{ \begin{array}{lcr}
\langle \alpha^1, \alpha^2, \alpha^3 \rangle \subset H^1(\mathfrak{n})& & \\[-8pt]\\
\langle \alpha^{12}, \alpha^{13}, \alpha^{23}, \alpha^{45} \rangle \subset H^2(\mathfrak{n}) & & \\[-8pt]\\
\langle \alpha^{123}, \alpha^{145}, \alpha^{245}, \alpha^{345} \rangle \subset H^3(\mathfrak{n}) & \Rightarrow & \mathcal{M}_U= (\Lambda (x_1,y_1,z_1,t_2,\beta_3),d), \\[-8pt]\\
\langle \alpha^{1245}, \alpha^{1345}, \alpha^{2345} \rangle \subset H^4(\mathfrak{n}) & & dx=dy=dz=dt=0, \; d\beta= t^2\\[-8pt]\\
\langle \alpha^{12345} \rangle = H^5(\mathfrak{n}) & &
\end{array} \right.
\end{displaymath}
The minimal model of the solvmanifold is $\mathcal{M}= (\Lambda (A_1,x_1,y_1,z_1,t_2,\beta_3),D)$, but we have 13 different choices for $D$. Fortunately, only the following are not isomorphic with each other:

\begin{enumerate}
\item $DA=Dx=Dy=Dz=Dt=0, \; D\beta=t^2$
\item $DA=Dx=Dy=0, \; Dz=Ay \; Dt=0, \; D\beta=t^2$
\item $DA=Dx=0, \; Dy=Ax, \; Dz=Ay \; Dt=0, \; D\beta=t^2$
\end{enumerate}
Computing the cohomology groups of these c.d.g.a. and comparing with those of $G_{6.10}^{a=0}/\Gamma_{\pi,\frac{\pi}{2},\frac{\pi}{3}}$, we find that (3) is the right one.

\smallskip

\noindent $\bullet$ {\large{$G_{6.11}^{p=0}/\Gamma_{2\pi}$}}: 
$\mathcal{M}= (\Lambda (A_1,x_1,y_1,z_3),0)$

\smallskip

\noindent $\bullet$ {\large{$G_{6.11}^{p=0}/\Gamma_{\pi,\frac{\pi}{2},\frac{\pi}{3}}$}}: 
$\mathcal{M}= (\Lambda (A_1,x_2,\beta _3,y_3),D), \; DA=Dx=Dy=0, \; D\beta=x^2$.

\begin{remark}
\textnormal{The model $(7)$ in the case of $G_{6.10}^{a=0}/\Gamma_{2\pi}$ has cohomology groups isomorphic to the cohomology groups of $G_{6.10}^{a=0}/\Gamma_{\pi,\frac{\pi}{2},\frac{\pi}{3}}$, and conversely the first model in $G_{6.10}^{a=0}/\Gamma_{\pi,\frac{\pi}{2},\frac{\pi}{3}}$ has  the same cohomology groups as the ones of $G_{6.10}^{a=0}/\Gamma_{2\pi}$.}
\end{remark}

\smallskip

\noindent $\bullet$ {\large $G_{5.14}^{0}\times\mathbb{R}/\Gamma_{2\pi}$}:
$$\mathcal{M}= (\Lambda (u_1,A_1,x_1,y_1,z_1,t_1),D), \; Du=DA=Dx=Dy=Dz=0,\,Dt=Ax\, .
$$

\noindent $\bullet$ {\large $G_{5.14}^{0}\times\mathbb{R}/\Gamma_{\frac{2\pi}{k}}$}:
$$\mathcal{M}= (\Lambda (u_1,A_1,x_1,y_1,z_2,\beta_3),D), \; Du=DA=Dx=Dz=0,\,Dy=Ax,D\beta=z^2
$$

\smallskip 

\noindent $\bullet$ {\large $G_{5.17}^{p,-p,r}\times\mathbb{R}/\Gamma_{2\pi r_2} \,  (r=\frac{r_1}{r_2}\in \Q)$}:

\noindent If  $p\neq 0$: $U= \left\{ \begin{array}{l}
\langle \alpha^{13}, \alpha^{14}, \alpha^{23}, \alpha^{24} \rangle \subset H^2(\mathfrak{n}) \\[-8pt]\\
\langle \alpha^{1234} \rangle = H^4(\mathfrak{n})
\end{array} \right. \,$. 

To list all the generators in $\mathcal{M}_U$ is almost impossible in this case: to every degree we need to add several generators to get the isomorphism in cohomology, but in this way we improve the number of generators needed.

With $\mathcal{M}^n$ we mean the subalgebra of $\mathcal{M}$ generated by all generators of $\mathcal{M}$ of degree $n$, then $\mathcal{M}_U^1=\{0\}$, $\mathcal{M}_U^2=(\Lambda(x_2,y_2,z_2,t_2),0)$ and $\forall n>2 \quad \mathcal{M}_U^n$ can be computed by induction (\cite[Theorem 2.24]{FOT}).
\\Then the minimal model of the solvmanifold is 

\[
\begin{array}{ll}
  \mathcal{M}=& (\Lambda (u_1,A_1,\mathcal{M}_U),D),\; Du=DA=Dx=Dy=Dz=Dt=0, \\&D|_{\mathcal{M}_U^n}\equiv d \; \forall n>2\, .
\end{array}
\]

\noindent If  $p= 0$: 
\begin{displaymath}
\begin{array}{lcr}
U=H^*(\mathfrak{n})  & \Rightarrow & \mathcal{M}_U= (\Lambda (x_1,y_1,z_1,t_1),0).\\
\end{array}
\end{displaymath}
The minimal model of the solvmanifold is $\mathcal{M}= (\Lambda (u_1,A_1,x_1,y_1,z_1,t_1),D)$, with different choices for $D$, but we know that it is isomorphic to $\mathbb{R}^6$, then $D\equiv 0$.

\smallskip

\noindent $\bullet$ {\large $G_{5.17}^{p,-p,r}\times\mathbb{R}/\Gamma_{\pi}$}:

$r$ even: 

if $p\neq 0$, then $\mathcal{M}= (\Lambda (u_1,A_1,x_4,\beta_7),D), \; Du=DA=Dx=0,\,D\beta=x^2$.

if $p=0$, then 
$$\mathcal{M}= (\Lambda (u_1,A_1,x_1,y_1,z_2,\beta_3),D), \;Du=DA=Dx=Dy=Dz=0,\,D\beta=z^2\,.
$$

 $r$ odd: 

if $p\neq 0$ we have the same model of $t=2\pi$.

If $p=0$ we are not able to list all the generators of $\mathcal{M}_U$, but as in the case of $t=2\pi r_2,\; p\neq 0$ we have 
\[
\begin{array}{ll}
  \mathcal{M}=& (\Lambda (u_1,A_1,\mathcal{M}_U),D), \; Du=DA=Dx=Dy=Dz=Dt=Ds=Dq=0 
  \\
  &D|_{\mathcal{M}_U^n}\equiv d \; \forall n>2\, .
\end{array}
\]

\smallskip 

\noindent $\bullet$ {\large $G_{5.17}^{p,-p,r}\times\mathbb{R}/\Gamma_{\frac{\pi}{2}}$}:

\noindent $r\equiv 0 \mod 4$: $p=0$ and we have the same computation of the case $t=\pi$ with $r$ even.\\
$r\equiv 1 \mod 4$: $\mathcal{M}= (\Lambda (u_1,A_1,\mathcal{M}_U),D), \; Du=DA=0,\,D|_{\mathcal{M}_U}\equiv d$.\\
$r\equiv 2 \mod 4$: $p=0$ and
$$\mathcal{M}= (\Lambda (u_1,A_1,x_2,y_2\beta_3,\gamma_3),D), \;Du=DA=Dx=Dy=0,D\beta=x^2, D\gamma=y^2 \, .
$$
$r\equiv 3 \mod 4$:  the model is the same of the case $r\equiv_4 1$.

\smallskip

\noindent $\bullet$ {\large $G_{5.18}^{0}\times\mathbb{R}/\Gamma_{2\pi}$}:
$$\mathcal{M}= (\Lambda (u_1,A_1,x_1,y_1,z_1,t_1),D), \; Du=DA=Dx=Dy=0,\,Dz=Ax,\, Dt=Ay\, .$$

\smallskip 

\noindent $\bullet$ {\large $G_{5.18}^{0}\times\mathbb{R}/\Gamma_{\pi}$}:
\[
\begin{array}{ll}
  \mathcal{M}=& (\Lambda (u_1,A_1,\mathcal{M}_U),D), \quad Du=DA=Dx=Dy=Dz=Dt=0,\, Ds=Ax,\,\\ &Dq=Ay, D|_{\mathcal{M}_U^n}\equiv d \; \forall n>2\, .
\end{array}
\]

\smallskip 

\noindent $\bullet$ {\large $G_{5.18}^{0}\times\mathbb{R}/\Gamma_{\frac{\pi}{3},\frac{\pi}{2},\frac{2\pi}{3}}$}:
\[
\begin{array}{ll}
  \mathcal{M}=&(\Lambda (u_1,A_1,\mathcal{M}_U),D), \quad Du=DA=Dx=Dy=0,\, Dz=Ax,\,Dt=Ay,
  \\
  &D|_{\mathcal{M}_U^n}\equiv d \; \forall n>2\, .
\end{array}
\]

\smallskip

\noindent $\bullet$ {\large $G_{3.5}^{0}\times\mathbb{R}^3/\Gamma_{2\pi}$}: 
$ \mathcal{M}=(\Lambda(w_1,v_1,A_1,x_1,y_1),0)$.

\smallskip 

\noindent $\bullet$ {\large $G_{3.5}^{0}\times\mathbb{R}^3/\Gamma_{\frac{2\pi}{k}}$}:
$$\mathcal{M}= (\Lambda (w_1,v_1,u_1,A_1,x_2,\beta_3),D), \;Dw=Dv=Du=DA=Dx=0,\,D\beta=x^2\, .
$$

\medskip 

Next, we use these models to decide which of these solvmanifolds are formal.

\smallskip

Recall that a manifold is \textit{formal} if so is its minimal model, that is there exists a c.g.d.a. homomorphism $\psi: \mathcal{M}\rightarrow H^*(\mathcal{M})$ that induces the identity in cohomology. In particular the definition implies that every closed generator must be sent to its cohomology class, while the others must be sent to zero, but not always this construction gives the identity also in higher dimension of cohomology.

\begin{proof}[Proof of Theorem \ref{formality}] If we have the explicit computation of the model of the solvmanifold we can define $\psi$ and see directly if the map induced in cohomology is the identity or not.\\
For example in the case $G_{6.10}^{a=0}/\Gamma_{2\pi}$, following the construction we have 
$$
\psi:  A  \mapsto  [A], \, x \mapsto [x], y \mapsto [y], z \mapsto [z], p \mapsto 0, q \mapsto 0\, , 
$$
then $\psi([Ap])=\psi([0])=0$, but $[Ap]\neq 0$, so $\psi^*$ is not the identity.

A similar proof can be done for the other solvmanifolds of which we have the explicit minimal model.

If we do not have the model of the solvmanifold we consider a weaker notion of formality introduced by M. Fern\'andez and V. Mu\~noz in \cite{FM1}, namely $s$-formality. 
It can be stated as follows (see \cite[Lemma 2.7]{FM2}). Let $M$ be a manifold with minimal model $\mathcal{M}=(\bigwedge V, d)$. Then $M$ is $s$-formal if and
only if there is a map of differential algebras
$\phi: (\bigwedge V^{\leq s}, d)\to (H^*
(M), d = 0)$, such that the map $\phi^\ast: H^*(\bigwedge V^{\leq s}, d)\to H^*(M)$ induced on cohomology is equal to the map $i^\ast: H^*(\bigwedge V^{\leq s}, d) \to (H^*(\bigwedge V, d)=H^*(M)$ induced by the inclusion $i: (\bigwedge V^{\leq s}, d) \to (\bigwedge V, d)$.\\
To study formality of these solvmanifolds we use the following theorem

\begin{theorem}\cite{FM1}
Let $M$ be a connected and orientable compact differentiable manifold of dimension $2n$, or $2n-1$. Then $M$ is formal if and only if is $(n-1)$-formal.
\end{theorem}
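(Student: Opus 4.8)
The plan is to work with Sullivan's minimal model $(\bigwedge V,d)$ of $M$ and the ideal reformulation of formality. Fix a decomposition $V=C\oplus N$ with $C=\ker(d|_V)$ and $d|_N$ injective, and let $I(N)\subset\bigwedge V$ be the ideal generated by $N$. Recall the standard criterion: $(\bigwedge V,d)$ is formal if and only if every closed element of $I(N)$ is exact, and $(n-1)$-formality (the $\phi$-condition of the statement) is equivalent to the same assertion for the ideal $I(N^{\le n-1})\subset\bigwedge V^{\le n-1}$. The easy implication, formal $\Rightarrow(n-1)$-formal, is then immediate: restricting a formality quasi-isomorphism $\psi:(\bigwedge V,d)\to(H^*(M),0)$ to $\bigwedge V^{\le n-1}$ produces exactly the map $\phi$ demanded by $(n-1)$-formality. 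For the converse I would phrase everything through the graded subspace $J:=\mathrm{im}\big(\{\text{closed elements of }I(N)\}\to H^*(M)\big)$, so that $M$ is formal precisely when $J=0$. A key structural observation is that $J$ is an \emph{ideal} of $H^*(M)$, since the product of a closed element of $I(N)$ with any closed form is again closed and lies in $I(N)$.

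First I would dispose of the two degree ranges that cost nothing. If $\zeta$ is a closed element of $I(N)$ of degree $\le n-1$, then every monomial of $\zeta$ is a product of generators of total degree $\le n-1$, so each factor, and in particular its distinguished $N$-factor, lies in $V^{\le n-1}$; hence $\zeta\in I(N^{\le n-1})$ and is exact by $(n-1)$-formality, giving $J^{\le n-1}=0$. At the other extreme, since $M$ is an $m$-manifold with $m\in\{2n-1,2n\}$ we have $H^{>m}(\bigwedge V,d)\cong H^{>m}(M)=0$, so any closed element of degree $>m$ is automatically exact. The decisive reduction is Poincaré duality: because $M$ is compact and orientable, $H^*(M)$ is a Poincaré-duality algebra of formal dimension $m$, and for any ideal $J$ of such an algebra one has $J=0$ if and only if $J^m=0$. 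Indeed, if $0\neq[\zeta]\in J^k$, nondegeneracy of the pairing $H^k\times H^{m-k}\to H^m\cong\R$ yields a class $[w]$ with $0\neq[\zeta][w]\in J^m$. Thus the entire theorem collapses to the single statement $J^m=0$, that is, \emph{every closed element of $I(N)$ of top degree $m$ is exact}.

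To establish $J^m=0$ I would analyse a closed $\zeta\in I(N)$ of degree $m$ by the degrees of the $N$-generators occurring in it, writing $\zeta=\sum_{\nu}\nu\wedge\beta_\nu$ with $\nu$ running over a basis of $N$ and $\deg\beta_\nu=m-\deg\nu$. The terms with $\deg\nu\le n-1$ assemble into an element of the ideal generated by $N^{\le n-1}$ in the \emph{full} algebra $\bigwedge V$; these are controlled by upgrading $(n-1)$-formality to the statement that closed elements of $I(N^{\le n-1})\cdot\bigwedge V$ are exact, which amounts to extending a primitive from $\bigwedge V^{\le n-1}$ across the higher-degree generators. The terms with $\deg\nu\ge n$ have cofactor $\beta_\nu$ of degree $m-\deg\nu\le m-n\le n$, so the duality between a high $N$-factor and its low cofactor lets me invoke the vanishing $J^{\le n-1}=0$ already obtained, together with the Poincaré pairing, to show that the corresponding contributions to $\int_M\zeta$ cancel.

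The hard part will be the self-dual boundary term that escapes both splittings: when $m=2n$ a monomial may pair an $N$-factor of degree exactly $n$ with a cofactor of degree exactly $n$, so that \emph{neither} factor falls in the range $\le n-1$ that $(n-1)$-formality controls. This middle-dimensional term is the genuine obstacle, and I expect to handle it by a symmetry argument. Graded commutativity makes the degree-$n$ Poincaré pairing $H^n\times H^n\to H^m$ symmetric or skew according to the parity of $n^2=n\cdot(2n-n)$, and combining this (anti)symmetry with the relation forced by $d\zeta=0$ and the nondegeneracy of the form on $H^n(M)$ should force the self-dual contribution to vanish too. Once $J^m=0$ is proved, the Poincaré-duality reduction gives $J=0$, and the ideal criterion then delivers a formality quasi-isomorphism $(\bigwedge V,d)\to(H^*(M),0)$, establishing that $(n-1)$-formality implies formality and completing the equivalence.
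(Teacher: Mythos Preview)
The paper does not contain a proof of this theorem; it is quoted from Fern\'andez--Mu\~noz \cite{FM1} and used as a black box, so there is no in-paper argument to compare your proposal against.

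Judged on its own, your architecture is that of the original \cite{FM1} proof: the ideal reformulation via $I(N)$, the observation that the obstruction set $J\subset H^*(M)$ is an ideal, and the Poincar\'e-duality reduction ``$J=0\iff J^m=0$'' are all correct and form the skeleton of the Fern\'andez--Mu\~noz argument. The genuine gap is exactly where you say ``the hard part will be''. First, decomposing a closed $\zeta=\sum_\nu \nu\wedge\beta_\nu$ by $\deg\nu$ does not give closed summands, so you cannot feed the low-$\nu$ piece directly into $(n-1)$-formality; the ``upgrading'' you invoke (from closed elements of $I(N^{\le n-1})$ inside $\bigwedge V^{\le n-1}$ to closed elements of $I(N^{\le n-1})\cdot\bigwedge V$) is itself a nontrivial inductive step in \cite{FM1}, not a free consequence of the definition. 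Second, and more seriously, the middle-dimensional self-pairing for $m=2n$ is not killed by any generic (anti)symmetry of the Poincar\'e form: the relation $d\zeta=0$ ties the $\beta_\nu$'s to the $d\nu$'s but does not by itself force $[\zeta]=0$, and in \cite{FM1} this case is handled by carefully extending the partial formality morphism across the degree-$n$ generators, using duality to choose compatible images, rather than by a parity trick. Your plan is on the right track, but both of these points require substantial additional work before it becomes a proof.
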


We can apply this theorem to the c.d.g.a. $\mathcal{M}_U$ because the manifold $M$ in the hypothesis can be replaced by a real c.d.g.a.  $\mathcal{A}$ with the following  properties:
\begin{itemize}
\item $H^0(\mathcal{A})=\mathbb{R}$
\item $\forall i>\dim(\mathcal{A})\quad  H^i(\mathcal{A})=0$
\item $H^{\dim(M)-i}(\mathcal{A})\cong H^i(\mathcal{A})$ (Poincar\'e duality)
\end{itemize}

$\mathcal{M}_U$ has dimension $4$ and it has always these three characteristics, so to prove that it is formal we must only prove that it is $1$-formal. In particular in the cases in which we do not have explicit model $\mathcal{M}_U$ is always simply connected because $U^1=\{0\}$, so it is $1$-formal and then the theorem states that it is always formal.

Now we use formality of $(\mathcal{M}_U,d_{\mathcal{M}_U})$ to study formality of the model of the solvmanifold $(\mathcal{M},D)$: if $\mathcal{M}$ has differential $D$ such that $D|_{\mathcal{M}_U}\equiv d_{\mathcal{M}_U}$, then it is obviously formal, otherwise we can show that it is not formal defining the map $\psi$ in a similar way to the case $G_{6.10}^{a=0}/\Gamma_{2\pi}$.
\end{proof}

In particular one can verify that all the not formal solvmanifolds that we have considered are 0-formal but not 1-formal.

\medskip

\noindent \textbf{
Acknowledgments.}  We would like to thank Anna Fino and Luis Ugarte for many
useful comments and suggestions.

\end{document}